\newif\ifcomments
\let\newComments\newKibitzer
\newComments\JV{JV}{red}
\newComments\EG{EG}{blue}
\newcommand{\e}{\mathfrak{e}}
\newcommand{\g}{\mathfrak{g}}
\newcommand{\h}{\mathfrak{h}}
\renewcommand{\k}{{\mathfrak k}}
\renewcommand{\l}{\mathfrak l}
\newcommand{\p}{\mathfrak p}
\newcommand{\q}{\mathfrak q}
\newcommand{\z}{Z_\h}
\renewcommand{\t}{{\mathfrak t}}
\renewcommand{\u}{{\mathfrak u}}
\newcommand{\C}{{\mathbb C}}
\newcommand{\R}{{\mathbb R}}
\newcommand{\Z}{{\mathbb Z}}
\newcommand{\res}{{\rm{res}}}
\newcommand{\supp}{{\rm{supp}}}
\newcommand{\Ad}{{\rm{Ad}}}
\newcommand{\diag}{{\rm{diag}}}
\newcommand{\Hom}{{\rm{Hom}}}
\newtheorem{thm}{Theorem}[section]
\newtheorem{prop}[thm]{Proposition}
\newtheorem{lem}[thm]{Lemma}
\newtheorem{cor}[thm]{Corollary}
\theoremstyle{definition}
\theoremstyle{remark}
\newtheorem{rmk}[thm]{Remark}
\newtheorem{fact}[thm]{Fact}
\numberwithin{equation}{section}
\begin{document}

\title{Discrete series of simple Lie groups with admissible restriction to a $SL_2(\mathbb R)$}
\author{Esther Galina,   Jorge A.  Vargas}
\thanks {Partially supported by  CONICET, SECYT-UNC (Argentina) }
\date{\today }
\keywords{ Discrete Series, branching laws, admissible restriction. }
\subjclass[2010]{Primary 22E46; Secondary 17B10}
\address{ FAMAF-CIEM, Ciudad Universitaria, 5000 C\'ordoba, Argentine}
\email{galina@famaf.unc.edu.ar, vargas@famaf.unc.edu.ar}

 \begin{abstract}
In this note we determine the irreducible square integrable representations of a simple group which admits an admissible restriction to a subgroup $H$ locally isomorphic to $SL_2(\mathbb R).$ We show such representation is holomorphic and we determine the essentially unique  $H$ with this property as well as multiplicity formulae.
  \end{abstract}

\maketitle
\markboth{Galina- Vargas}{Branching to   $SL_2(\mathbb R)$}

\section{introduction}
Let $G$ be a connected simple matrix Lie group and $K$ a fixed maximal compact subgroup of $G$. We assume that both groups $K$ and $G$  has the same rank.  From now on  $T \subset K$ is a fixed maximal torus. Therefore, $T$ is a compact Cartan subgroup of $G$. Under these hypothesis, Harish-Chandra showed there exists irreducible unitary representations of $G$ so that its matrix coefficients are square integrable with respect to a Haar measure on $G$. Let $H$ denote   the image of a nontrivial continuous morphism from $SL_2(\mathbb R)$ into $G$. The aim of this note is to determine the pairs $((\pi, V),H)$ of irreducible square integrable representations $(\pi, V)$ of $G$ which admits admissible restriction to $H$.  That is, $\res_H  \pi$ is an admissible representation, or equivalently, $res_H \pi$   is equal to a discrete Hilbert sum of irreducible representations of $H$ and the multiplicity of each irreducible factor is finite.

To state the main results we need to fix some more notation. The Lie algebra of a Lie group is denoted by the corresponding lower case Fraktur font and the complexification of a real Lie algebra, or a vector space, is denoted by adding the subscript $\C$. The conjugation of $X \in  \g_\C$ with respect to the real form $\g$ of $\g_\C$ is denoted by $\bar X$.
Denote by $\Phi (\g,\t)$ (resp. $\Phi(\k,\t)$) the root system of $\t_\mathbb C$ in $\g_\mathbb C$ (resp. $\t_\mathbb C$ in $\k_\mathbb C$).  The set of noncompact roots is defined by  $\Phi_n = \Phi (\g,\t) \smallsetminus \Phi(\k,\t).$     We write $\g_\mathbb C= \k_\mathbb C \oplus \p_\mathbb C$ the Cartan decomposition corresponding to the pair $(\g, K)$. Then, the root vectors associated to $\alpha \in \Phi_n$ (resp. $\alpha \in \Phi(\k,\t)$) lie in $\p_\mathbb C$ (resp. in $\k_\mathbb C$). Recall that a system of positive roots $\Psi $ of the root system $\Phi(\g, \t)$ is {\it holomorphic} whenever the sum $\alpha +\beta$   is not a root for every pair $\alpha, \beta \in \Psi_n:=\Psi \cap \Phi_n$. For a unitary representation $(\pi, V), $ the action of $X \in \g_\C$ on the subspace of $K$-finite vectors is denoted by $\dot \pi (X).$  A unitary irreducible representation $(\pi, V)$ is {\it holomorphic} when there exists a holomorphic system of positive roots $\Psi \subset \Phi(\g,\t)$ and a nonzero $K$-finite vector $ v \in V$ so that $\dot{\pi}(X_{-\gamma}) v=0$ for every noncompact root $\gamma$ in $\Psi$ and for every root element $X_\gamma$ associated to $\gamma$.

After the work of Harish-Chandra and other researchers it has been shown that holomorphic system of positive roots, as well as holomorphic unitary irreducible representations, do exist if and only if the Riemannian manifold $G/K$ admits a $G$-invariant complex structure. That is, $G/K$ is a Hermitian symmetric space.

We are ready to state our first result.

\begin{thm} \label{prop:res_H}
Assume there exists a square integrable irreducible representation $\pi$  for $G$ and  a subgroup $H$ locally isomorphic to $SL_2(\mathbb R)$ such that the restriction of $\pi$ to $H$ is an admissible representation. Then, $G/K$ is an Hermitian symmetric space and $\pi$ is a holomorphic representation of $G$.
\end{thm}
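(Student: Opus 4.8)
The plan is to use Kobayashi's theory of admissible restriction, which characterizes admissibility via the associated variety / momentum geometry of the restriction. The central tool is Kobayashi's criterion: for a discrete series representation $\pi$ of $G$ with Harish-Chandra parameter $\lambda$, the restriction to a reductive subgroup $H$ is admissible (i.e.\ $K$-admissible in the infinitesimal sense, which for $H\cong SL_2(\R)$ amounts to $T_H$-admissibility) precisely when the cone condition $\mathrm{Cone}_{T_H}\cap \mathrm{ASupp}(\pi)=\{0\}$ holds, where $\mathrm{ASupp}(\pi)$ is the asymptotic $T$-support of $\pi$. So my first step is to translate the admissibility hypothesis into this geometric condition on the root data.

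Next I would compute the asymptotic support explicitly. For a discrete series $\pi$ attached to a positive system $\Psi$, the asymptotic cone of the $K$-types is the closed cone generated by $\Psi_n=\Psi\cap\Phi_n$ inside $i\t^*$ (equivalently, by duality, the support sits in the cone spanned by the noncompact positive roots). The momentum image of $H$ is essentially the line through the coroot/image of the distinguished $\mathfrak{sl}_2$-triple; admissibility forces this line to meet the noncompact cone only at the origin. The key structural step is then to argue that the cone generated by $\Phi_n$ meets some line transversally only when $\Phi_n$ itself is a ``one-sided'' configuration, and this is exactly what a holomorphic (i.e.\ Hermitian) positive system provides: in the Hermitian case the noncompact roots split into $\Psi_n$ and $-\Psi_n$ with $\Psi_n$ contained in an open half-space, so the cone it generates is proper (pointed), whereas in the non-Hermitian case the noncompact roots span a non-pointed cone (because some sum of two noncompact positive roots is again a root, forcing the cone to be ``fat'' and to contain lines). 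The contrapositive gives: if $G/K$ is not Hermitian, no line through the origin can avoid the noncompact cone, so no $H\cong SL_2(\R)$ can have admissible restriction.

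Having forced $G/K$ to be Hermitian, I would then upgrade ``$G/K$ Hermitian'' to ``$\pi$ holomorphic.'' In the Hermitian case $G$ has holomorphic and antiholomorphic discrete series (attached to the two holomorphic positive systems $\pm\Psi$) as well as ``genuinely mixed'' discrete series whose Harish-Chandra parameter is dominant for a non-holomorphic positive system. For the mixed ones the noncompact roots appearing in the lowest-$K$-type cone still generate a non-pointed subcone, so the same transversality obstruction applies and such $\pi$ cannot restrict admissibly; only the (anti)holomorphic series survives. Concretely I would check that for a holomorphic representation one can actually exhibit an $H$ whose $T_H$-support avoids the cone, so the characterization is sharp. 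The main obstacle I anticipate is the second bullet of the cone computation: rigorously proving that non-Hermitian $\Phi_n$ generates a non-pointed cone and that the only admissible $H$-directions are the ``holomorphic'' ones requires a careful case analysis of the restricted root structure of the image of $\mathfrak{sl}_2$, tying the $H$-momentum direction to a specific element of $i\t$ and showing it must lie in the interior of the dual cone — which exists as a nonzero pointed region exactly in the Hermitian case.
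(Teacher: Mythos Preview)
Your route via Kobayashi's asymptotic--cone criterion is genuinely different from the paper's, but it contains a real gap. The crucial claim you lean on is that the closed cone generated by $\Psi_n$ is non-pointed (equivalently, its dual cone has empty interior) whenever $\Psi$ is not a holomorphic system. This is false already for $\mathfrak{su}(2,1)$. With the non-holomorphic positive system $\Psi'=\{\epsilon_1-\epsilon_2,\ \epsilon_1-\epsilon_3,\ \epsilon_3-\epsilon_2\}$ one has $\Psi'_n=\{\epsilon_1-\epsilon_3,\ \epsilon_3-\epsilon_2\}$; these two vectors generate a pointed $2$-cone, and $Z=\diag(1,-1,0)$ lies in the interior of its dual. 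So ``non-holomorphic $\Rightarrow$ non-pointed'' fails, and with it your contrapositive. What actually rescues the theorem in this example is the \emph{extra} constraint that $Z_\h$ must be the characteristic of a KS-triple with $E_\h\in\p_\C$: no noncompact root takes the value $2$ on $\diag(1,-1,0)$, so this $Z$ is not a legitimate $Z_\h$. Your outline never establishes this integrality/eigenvalue obstruction in general, and the cone geometry alone cannot supply it.

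By contrast, the paper avoids the cone analysis entirely. From $H$-admissibility it first deduces admissibility for the one-dimensional torus $H\cap T$, hence (via Kobayashi's Theorem~1.2) admissibility of $\pi$ restricted to $TH_\beta$ for \emph{every} noncompact root $\beta$. Each irreducible $TH_\beta$-constituent of a square integrable $\pi$ is then a highest/lowest weight module, so some nonzero $K$-finite vector is annihilated by $X_\beta$ or $X_{-\beta}$. At this point a single representation-theoretic input (Vogan's Lemma~3.4 in \cite{Vo}) finishes the job: for non-Hermitian $G/K$, and for non-holomorphic discrete series in the Hermitian case, the operators $\dot\pi(X_{\pm\gamma})$ are injective on $K$-finite vectors, a contradiction. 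This argument is short and uniform; it does not require analyzing which $Z_\h$ can occur or any case-by-case cone computation. If you want to salvage the cone approach, you would have to replace the false pointedness dichotomy by a direct proof that for non-holomorphic $\Psi$ no characteristic vector $Z_\h$ of a KS-triple satisfies $\gamma(Z_\h)>0$ for all $\gamma\in\Psi_n$; that is essentially a different problem, closer in spirit to the paper's later Proposition~3.4 than to the asymptotic-support heuristic you sketched.
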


In order to list the pairs $((\pi, V),H) $ we are searching for, we recall on one side the list of pairs of Hermitian symmetric Lie algebras $(\g, \k)$ and a classification, up to conjugacy, of the subgroups of $G$ locally isomorphic to $SL_2(\mathbb R)$.
The list of Hermitian symmetric pairs is:

\smallskip

$ AIII \,\, (\mathfrak{su}(p,q), \mathfrak{su}(p) \oplus \mathfrak{su}(q)+\mathfrak{so}(2))$

$BDI \,\, (\mathfrak{so}(p,2), \mathfrak{so}(p) \oplus\mathfrak{so}(2))$

$CI \,\, (\mathfrak{sp}(n,\mathbb R), \mathfrak{u}(n))$

$ DIII \,\, (\mathfrak{so}^\star(2p), \u(p))$

$ EIII \,\, (\e_{6(-14)}, \mathfrak{so}(5)\oplus\mathfrak{so}(2))$

$  EVII  \,\, (\e_{7(-25)}, \e_6 \oplus \mathfrak{so}(2))$

Among them, only $AIII (p=q$), $BDI, CI, DIII$ ($p$ even) and $EVIII$ corresponds to  tube domains.

\smallskip

The classification of the Lie subalgebras $\h$  of $\g$ isomorphic to $\mathfrak{sl}_2(\mathbb R)$, up to conjugacy,  has been obtained by Kostant-Rallis. In particular, they show that the number of conjugacy classes is finite.  An explicit list of representatives of the conjugacy classes for group $G$ has been obtained by
 \cite{Oh}, \cite{Dk2} for classical real Lie groups and in  \cite{Dk1} for the exceptional groups. In order to state the version of the   classification more suitable for this work,  to the end of this introduction we assume \begin{center} $G/K$ is a Hermitian symmetric space. \end{center} We fix a holomorphic system of positive roots $\Psi \subset \Phi(\g, \t)$, and   we denote the simple roots of $\Psi$ by
\begin{equation} \label{eqn:simple roots}
\{\beta_1, \dots, \beta_\ell\} \qquad \text{where}\, \beta_1, \dots, \beta_{\ell-1} \in \Phi(\k,\t)\text{ and }  \beta_\ell \in \Phi_n.
\end{equation}
The highest root is written as $\beta_M = \sum_j c_j \beta_j $ with $c_j \geq 1 $ for all $j$ and $c_\ell =1$.

Under this setting, they  have shown that a subalgebra of $\g$ isomorphic to $\mathfrak{sl}_2(\mathbb R) $ is conjugated by an element of $G$ to a subalgebra $\h$ whose complexification is  spanned by a normal triple, or more precisely, a KS-triple
$$
Z_\h \in i(\t \cap \h),\quad E_\h \in \p_\C \cap \h_\C, \quad F_\h \in \p_\C \cap \h_\C
$$
which satisfies,
\begin{equation}
\begin{aligned}\label{eqn:KS-triple}
[Z_\h,E_\h]= 2E_\h,& \quad [Z_\h,F_\h]=-2F_\h,\quad [E_\h,F_\h]=Z_\h, \quad \bar{E_\h}=F_\h\\
\alpha (Z_\h) &\geq 0 \qquad \text{for every}\, \alpha \in \Psi_c:=\Psi \cap \Phi(\k, \t)
\end{aligned}
\end{equation}
Moreover, the characteristic vector  $Z_\h$ of the $KS$-triple determines the $K_\C$-conjugacy class of $\h$.  That is, if $ Z_\h = Z_{\h'}$ then $\h_\C$ is $K_\C$-conjugated to $\h'$ (see Theorem 9.4.4 of \cite{CM}).

The finite set of vectors $\z$ which parametrize the $G$-conjugacy classes of $SL_2$-subgroups, can be expressed explicitly in terms of the dual basis to the basis of simple roots \eqref{eqn:simple roots}.  The coordinates of each $Z_\h$  has been computed for the classical Lie algebras in \cite{Dk2}, \cite{Oh}  and for the exceptional ones in \cite{Dk1}. Moreover, for the classical Lie algebras, they compute the eigenvalues of the matrix $\z$.  More precisely, if $Z_j, j=1, \dots, \ell$, denote the dual basis to \eqref{eqn:simple roots}. That is, the elements of $\t_\C$,  $Z_j$ are  such that $\beta_r (Z_s)=\delta_{rs}$, hence, we have
$$
Z_\h =\sum_{1 \leq s \leq \ell} \beta_s(Z_\h) Z_s
$$
It follows  from \eqref{eqn:KS-triple} and the work of Kostant-Rallis that  $\beta_s(Z_\h)$ is a  non negative integer    for $s=1,\dots, \ell -1$  and that $\beta_\ell(Z_\h)$ is an integer.  The precise values of the integers $\beta_s(\z)$ for the exceptional Lie algebras are listed in \cite{Dk1}. In Proposition \ref{prop:Z=Z0} we show that there is a unique characteristic vector $Z_\h$ with $\beta_\ell (Z_\h)$ a positive number. We also show that the hypothesis that $(\pi, V)$ is a holomorphic square integrable representation with admissible restriction to $H$ forces either that $\beta_\ell (Z_\h) > 0$, or $\beta_\ell (Z_\h) < 0$ and $\beta_i(\z)=0$ for all $1\leq i \leq \ell-1$.
Next, we produce an explicit example of a KS-triple $\{Z_0, E_0, F_0\}$
such that any holomorphic representation  $\pi$ restricted to the corresponding $SL_2$-subgroup  $H_0$ is admissible. Later on we show that $H_0$ is unique up to conjugacy by elements of $G$.

We denote by  $(\, ,\,)$ the inner product on $i\t^*$ associated to the Killing form on $\g$.  From the holomorphic system $\Psi$, Harish-Chandra has constructed a strongly orthogonal spanning  set
\begin{equation} \label{eqn:S}
S=\{ \gamma_1, \dots, \gamma_r \}  \subset \Psi_n
\end{equation}
as follows: $\gamma_1 =\beta_M $ is the maximal root in $\Psi_n=\Psi \cap \Phi_n$,
  $\gamma_2$ is the maximal root in $\{ \gamma \in \Psi_n : \gamma \perp \gamma_1\}$ and so on. Here $r$ is the real rank of the group $G$.
Notice that there is no non compact root orthogonal to $S$. For each root $\gamma  \in \Psi_n$ we fix a KS-triple $Z_\gamma \in i\t$, $E_\gamma \in \g_\gamma \cap \p_\C$ and $ F_\gamma \in \g_\gamma \cap \p_\C$.
In particular, if $H_\gamma$ denotes the coroot vector of $\gamma$, then $Z_\gamma =\frac{2 H_\gamma}{(\gamma, \gamma)}$, $\bar E_\gamma = F_\gamma$ and  $[E_\gamma, F_\gamma ]= Z_\gamma$. Let
\begin{equation} \label{def:Z0}
Z_0 =Z_{\gamma_1}+\dots +Z_{\gamma_r}, \quad E_0=E_{\gamma_1}+\dots +E_{\gamma_r}, \quad F_0 =F_{\gamma_1}+\dots +F_{\gamma_r}.
\end{equation}
Since $S$ is a strongly orthogonal set of non compact roots it readily follows that
$\{Z_0, E_0, F_0\}$ is a KS-triple. Let $H_0$ the image of $SL_2(\mathbb R)$ in $G$ associated to this triple. Later on, in Lemma \ref{lem:Z0}, we show that $\beta_\ell (Z_0) >0.$ This will verify the first statement in the following theorem.

\

 \begin{thm} \label{prop:H=H0}
 \

 \begin{enumerate}
 \item[(i)]  Any holomorphic irreducible, square integrable representation of $G$ has an admissible restriction to $H_0$.
 \item[(ii)] Whenever a holomorphic irreducible, square integrable representation of $G$ has admissible restriction to  a  subgroup $H$ locally isomorphic to  $Sl_2(\mathbb R)$, then $H$ is conjugated under $G$ to $H_0.$
\end{enumerate}
 \end{thm}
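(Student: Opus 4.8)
The plan is to reduce both statements to a properness property of the single linear functional $\mu\mapsto\mu(\z)$ on the $\t$-weights of $\pi$, via Kobayashi's principle that for the reductive pair $(G,H)$ with compatible maximal compacts, $\res_H\pi$ is $H$-admissible if and only if it is $K_H$-admissible. Here $K_H$ is the circle generated by $i\z$, since \eqref{eqn:KS-triple} places $i\z$ in $\t$, where it spans the maximal compact subalgebra $\h\cap\k$; hence $K_H$-admissibility is exactly the statement that each eigenspace of $\dot\pi(\z)$ on the $K$-finite vectors is finite dimensional. I would next record that, $\pi$ being holomorphic, there is a minimal $K$-type $V_{\min}$ annihilated by all $\dot\pi(X_{-\gamma})$, $\gamma\in\Psi_n$, and $V$ is generated from it by the raising operators $E_\gamma$; consequently every $\t$-weight of $\pi$ has the form $\mu=\nu+\sum_{\gamma\in\Psi_n}a_\gamma\gamma$ with $a_\gamma\in\N$ and $\nu$ in the finite weight set of $V_{\min}$, so that $\mu(\z)=\nu(\z)+\sum_\gamma a_\gamma\,\gamma(\z)$.

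For part (i) I would take $\z=Z_0$ and show $\gamma(Z_0)>0$ for every $\gamma\in\Psi_n$. Strong orthogonality of $S$ gives $\gamma_i(Z_0)=\sum_k\frac{2(\gamma_i,\gamma_k)}{(\gamma_k,\gamma_k)}=2$ for each $i$, and the Harish-Chandra description of the restricted root system (of type $C_r$ or $BC_r$) shows that every noncompact positive root projects onto the span of $S$ as $\tfrac12(\gamma_i+\gamma_j)$ or $\tfrac12\gamma_i$, while the differences $\tfrac12(\gamma_i-\gamma_j)$ are compact; hence $\gamma(Z_0)\in\{1,2\}$. Since $\nu(Z_0)$ ranges over a finite set, the level set $\{\mu:\mu(Z_0)=c\}$ forces $\sum_\gamma a_\gamma$ bounded, so only finitely many weights, each of finite multiplicity, share a given $Z_0$-eigenvalue. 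This is the desired $K_{H_0}$-admissibility, hence $H_0$-admissibility, and it reproves $\beta_\ell(Z_0)>0$ from Lemma~\ref{lem:Z0}.

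For part (ii), let $H$ be as stated with characteristic vector $\z$, normalized by \eqref{eqn:KS-triple}; recall that $\z$ is a complete invariant of the $G$-conjugacy class of $H$. Applying the weight analysis to $\z$, finiteness of the eigenspaces of $\dot\pi(\z)$ fails if some $\gamma(\z)=0$ (let $a_\gamma\to\infty$) or if two roots of $\Psi_n$ give values of opposite sign (balance them), so admissibility forces $\gamma\mapsto\gamma(\z)$ to be nonzero and of constant sign on $\Psi_n$; with the Kostant--Rallis inequalities $\beta_s(\z)\ge0$ for $s\le\ell-1$ this is exactly the dichotomy of Proposition~\ref{prop:Z=Z0}. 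In case (A), $\beta_\ell(\z)>0$, the uniqueness clause of Proposition~\ref{prop:Z=Z0} together with $\beta_\ell(Z_0)>0$ forces $\z=Z_0$, so $H$ is $G$-conjugate to $H_0$. In case (B), $\beta_\ell(\z)<0$ and $\beta_i(\z)=0$ for $i\le\ell-1$, I would use the Weyl element $w_H\in H$, whose adjoint action carries $\{Z_\h,E_\h,F_\h\}$ to the KS-triple $\{-\z,-F_\h,-E_\h\}$ of the same $H$; because every compact root has zero $\beta_\ell$-coefficient, $\alpha(-\z)=0$ for all compact $\alpha$, so $-\z$ meets the normalization \eqref{eqn:KS-triple} with $\beta_\ell(-\z)>0$, and uniqueness again gives $-\z=Z_0$. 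As $-\z$ and $Z_0$ are characteristic vectors of $H$ and of $H_0$ respectively, $H$ is $G$-conjugate to $H_0$. The main obstacle I anticipate is precisely this sign bookkeeping in case (B): one must verify that the flipped triple still satisfies \eqref{eqn:KS-triple} and lands in the finite list to which the uniqueness of Proposition~\ref{prop:Z=Z0} applies; by contrast the positivity $\gamma(Z_0)\in\{1,2\}$ underlying part (i) is routine once the restricted-root structure is in hand.
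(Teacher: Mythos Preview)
Your proposal is correct and follows essentially the same architecture as the paper: for (i), positivity of $\gamma(Z_0)$ on $\Psi_n$ plus Kobayashi's criterion (your restricted-root argument is a slightly more detailed variant of the paper's Lemma~\ref{lem:Z0}); for (ii), reduce to the simple-root dichotomy, invoke the uniqueness Proposition~\ref{prop:Z=Z0} in the positive case, and flip the triple in the negative case. Two small corrections are in order. First, the dichotomy you cite is Proposition~\ref{prop:res to H cap T}, not Proposition~\ref{prop:Z=Z0}; note also that ``all $\gamma(\z)$ of one sign on $\Psi_n$'' is not literally equivalent to conditions (i)/(ii) there, and in your case~(B) you do use $\beta_i(\z)=0$ for $i\le\ell-1$ to conclude $\alpha(-\z)=0$ for compact $\alpha$, so you should cite Proposition~\ref{prop:res to H cap T} rather than claim to have rederived it. Second, there is no Weyl element $w_H$ inside a connected $H\cong SL_2(\mathbb R)$ flipping the compact Cartan; the paper instead simply observes (as you yourself anticipate) that $\{-Z_\h,F_\h,E_\h\}$ is again a KS-triple for the same $\h$, which is all that is needed.
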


The first statement follows from work of  Vergne, Jacobsen, T. Kobayashi, Mollers and Oshima, for a reference \cite{om}. However, we give an independent proof. In section 5 we compute the multiplicity and Harish-Chandra parameter of each irreducible $H_0$-factor.

This note is organized as follows,  in the introduction we state our main results but the one on multiplicity and Harish-Chandra parameter of each irreducible factor, which is presented in Section 5. In section 2 and 3  we respectively present the proof of the two theorems stated in the introduction. In section 4 we write down the groups involved in the statement of the results.

\section{Proof of Theorem \ref{prop:res_H}}

For each noncompact root $\beta$, the subalgebra $\mathfrak{sl}_2(\beta)$ of $\g$ spanned by the root elements associated to $\pm \beta$ is isomorphic to $\mathfrak{sl}(2,\mathbb C)$ and invariant under the conjugation of $\g_\mathbb C$ with respect to $\g.$ Thus, the real points of the subalgebra $\mathfrak{sl}_2(\beta)$ is an algebra isomorphic to $\mathfrak{sl}_2(\mathbb R).$  Let $H_\beta$ denote the analytic subgroup corresponding to this real subalgebra.

Let $H$ be as in Theorem \ref{prop:res_H} and $T$ a compact Cartan subgroup of $G$ as in the introduction. We may assume, after conjugation by an element of $G$, that $H\cap T=H\cap K$ is a maximal compact subgroup of $H.$
The hypothesis that $(\pi, V)$ restricted to $H$ is admissible and $\pi$ is square integrable,  imply that $\pi$ restricted to $H\cap T$ is an admissible representation, see \cite{DV}.
Since $H_\beta$ is invariant under conjugation by elements of $T$, $TH_\beta$ is an analytic subgroup of $G$ with $T$ as a maximal compact subgroup. To continue we formulate a useful result of Kobayashi.

\begin{fact}\label{fact:Kb1}
 For a given a closed subgroup $L$ of $G$ with finitely many connected component and invariant under the Cartan involution, we know that a maximal compact subgroup of $L$ is $L\cap K$. In this context, if $(\pi,V)$ is a irreducible representation of $G$ and the restriction of $\pi$ to $L\cap K$ is admissible, Theorem 1.2 of  \cite{Kb1}, guarantees that the restriction of $\pi$ to $L$ is admissible.
\end{fact}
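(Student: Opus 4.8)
The plan is to separate the two assertions and prove the structural one directly, reserving the cited theorem for the substantive admissibility transfer. For the structural claim, let $\theta$ denote the Cartan involution of $G$ with fixed point set $K$, and write $\mathfrak{l}$ for the Lie algebra of $L$. Since $L$ is $\theta$-stable, $\theta$ preserves $\mathfrak{l}$, so $\mathfrak{l}=(\mathfrak{l}\cap\mathfrak{k})\oplus(\mathfrak{l}\cap\mathfrak{p})$. The polar map $(L\cap K)\times(\mathfrak{l}\cap\mathfrak{p})\to L$, $(k,X)\mapsto k\exp X$, is the restriction of the diffeomorphism $K\times\mathfrak{p}\to G$ furnished by the global Cartan decomposition; because $L$ is closed, $\theta$-stable and has finitely many connected components, this restriction is again a diffeomorphism, whence $L=(L\cap K)\exp(\mathfrak{l}\cap\mathfrak{p})$. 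As $\exp(\mathfrak{l}\cap\mathfrak{p})$ is diffeomorphic to a vector space, every compact subgroup of $L$ is conjugate into $L\cap K$, so $L\cap K$ is a maximal compact subgroup of $L$.

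Next I would carry out the admissibility transfer. The decomposition $\mathfrak{l}=(\mathfrak{l}\cap\mathfrak{k})\oplus(\mathfrak{l}\cap\mathfrak{p})$ exhibits $\mathfrak{l}$ as a reductive subalgebra of $\mathfrak{g}$ (a $\theta$-stable subalgebra is automatically reductive in $\mathfrak{g}$), so $L$ is a reductive subgroup of $G$ of the class to which Kobayashi's results apply, and by the first paragraph $K':=L\cap K$ is a maximal compact subgroup of $L$. The hypothesis that $\pi|_{K'}$ is admissible says exactly that every irreducible representation of $K'$ occurs in $\pi$ with finite multiplicity, which is the input required by Theorem~1.2 of \cite{Kb1}. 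Its conclusion is that $\pi|_L$ decomposes as a discrete Hilbert sum of irreducible representations of $L$ with finite multiplicities, that is, $\pi|_L$ is admissible; invoking the theorem finishes the argument.

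I expect the genuine obstacle to lie entirely in this last implication, namely that finiteness of the $K'$-multiplicities forces finiteness of the $L$-multiplicities. This is the heart of Kobayashi's theory and rests on his analysis of the associated variety and wave front set of $\pi$, which I would not reproduce. On our side the only points needing care are that $L$ is reductive in $G$ and that $L\cap K$ is its maximal compact subgroup, both supplied above. In the application of interest here $L=TH_\beta$ is visibly $\theta$-stable with reductive Lie algebra $\mathfrak{t}+\mathrm{Lie}(H_\beta)$, so the hypotheses are met.
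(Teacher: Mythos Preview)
The paper does not actually prove this statement; it records it as a \emph{Fact} and defers entirely to Theorem~1.2 of \cite{Kb1} for the admissibility transfer, taking the structural assertion that $L\cap K$ is a maximal compact subgroup of $L$ as known. Your proposal is a correct elaboration of exactly what the paper leaves implicit: you supply the standard Cartan-decomposition argument for the structural claim and then invoke Kobayashi for the substantive part, which is precisely the paper's intended route.

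One small caution: the assertion that ``a $\theta$-stable subalgebra is automatically reductive in $\mathfrak g$'' is true but is itself a nontrivial fact (it uses that the form $B_\theta(X,Y)=-B(X,\theta Y)$ is positive definite, forcing any $\theta$-stable ideal to have a $\theta$-stable complement); you might either cite this or, since in the application $L=TH_\beta$ has visibly reductive Lie algebra, simply note that reductivity is clear in the cases of interest. Otherwise your argument is sound and matches the paper's reliance on \cite{Kb1}.
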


Owing to the Fact \ref{fact:Kb1} we have that $\pi$ restricted to $TH_\beta$ is an admissible representation. Moreover, Kobayashi  shows that the subspace of $T$-finite vectors in $V$ is identical to the subspace of $K$-finite vectors (see \cite{Kb2}, Proposition 1.6). The hypothesis  $(\pi,V)$ is a square integrable representation,  implies that each of the $TH_\beta$- irreducible constituents is a square integrable representation of $TH_\beta$, finally,  the Harish-Chandra module underlying an  irreducible square integrable representations of $SL_2(\mathbb R)$ is a  Verma modules. Therefore, there exists a non zero $K$-finite vector $v \in V$ so that either  $\dot{\pi}(X_{-\beta})(v)=0$ or $\dot{\pi}(X_{\beta})(v)=0$.
 From Lemma 3.4 of \cite{Vo} we have that $\dot{\pi}(X_{\pm \gamma})$, with $\gamma \in  \Psi_n$, is injective for every nilpotent element unless  $G/K$ is a Hermitian symmetric space. In turn, it follows that  $(\pi, V)$ is either  a holomorphic representation or antiholomorphic  representation,  details can be found in \cite{Va}. Thus, the proof of Theorem \ref{prop:res_H} is finished.

\section{Proof of Theorem \ref{prop:H=H0}}

Let $\p^+ $    denote the subspace spanned by the root elements corresponding to the roots in $\Psi_n$. Then,   $\p^+$ is invariant under the action of $K_\C$ via the adjoint representation of $G_\C.$ Hence, the symmetric algebra $S(\p^+)$ is a $K$-module. Let $W$ denote the lowest $K$-type of the holomorphic discrete series representation $(\pi, V)$, then the space of $K$-finite vectors of $\pi$ is $K$-isomorphic to the $K$-module $S(\p^+)\otimes W$.

\begin{fact} \label{fact:res_L}
In \cite{Kb1}, \cite{DV}, it is pointed out that the restriction of $\pi$ to $L$, with $L$ a closed connected subgroup  of $K$,  is admissible if and only if the algebra of invariants $S(\p^+)^L$ consists of the set of constant elements.
\end{fact}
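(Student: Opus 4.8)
The plan is to reduce the $L$-admissibility of $\pi|_L$ to a finiteness statement about $L$-multiplicities in the polynomial ring $S(\p^{+})$, and then to establish the stated invariant-theoretic criterion by proving the two implications separately. First I would use that, $\pi$ being a holomorphic discrete series, its space of $K$-finite vectors is $L$-isomorphic to $S(\p^{+})\otimes W$, where $W$ is the finite dimensional lowest $K$-type. Since $L\subset K$ is compact, $\pi|_L$ is admissible precisely when every $\sigma\in\widehat L$ occurs with finite multiplicity in $S(\p^{+})\otimes W$. Because $W$ is finite dimensional, for a fixed $\sigma$ only finitely many $\tau\in\widehat L$ satisfy $\Hom_L(\tau\otimes W,\sigma)\neq 0$, and one has
$$
[S(\p^{+})\otimes W:\sigma]=\sum_{\tau}\,[S(\p^{+}):\tau]\,\dim\Hom_L(\tau\otimes W,\sigma),
$$
a finite sum; hence all these multiplicities are finite if and only if $[S(\p^{+}):\tau]<\infty$ for every $\tau\in\widehat L$. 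Thus it suffices to show that every $L$-type occurs with finite multiplicity in $S(\p^{+})$ if and only if $S(\p^{+})^{L}=\C$.

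The implication from finiteness to triviality of the invariants is the elementary one, argued by contraposition: if $S(\p^{+})^{L}\neq\C$, choose a homogeneous invariant $f$ of positive degree; then $1,f,f^{2},\dots$ are linearly independent invariants, so the trivial $L$-type already occurs with infinite multiplicity in $S(\p^{+})$ and $\pi|_L$ cannot be admissible.

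For the converse I would assume $S(\p^{+})^{L}=\C$ and complexify. As $L$ is a closed connected subgroup of the compact group $K$, its complexification $L_\C$ is connected reductive and acts algebraically on $\p^{+}$, and $S(\p^{+})^{L_\C}=S(\p^{+})^{L}=\C$ by the unitarian trick. Fixing $\sigma\in\widehat L$ with representation space $V_\sigma$, the $\sigma$-multiplicity equals $\dim\bigl(S(\p^{+})\otimes V_\sigma^{*}\bigr)^{L_\C}$. Now $S(\p^{+})\otimes V_\sigma^{*}$ is a finitely generated $S(\p^{+})$-module, so by the finiteness theorem for reductive group actions its invariant submodule $\bigl(S(\p^{+})\otimes V_\sigma^{*}\bigr)^{L_\C}$ is finitely generated over $S(\p^{+})^{L_\C}=\C$, that is, finite dimensional. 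Hence $[S(\p^{+}):\sigma]<\infty$ for every $\sigma$, as required.

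I expect the main obstacle to be precisely this last step, namely the passage from the scalar condition $S(\p^{+})^{L_\C}=\C$ to finiteness of all isotypic components; it rests on the reductivity of $L_\C$ and on the fact that taking $L_\C$-invariants of a finitely generated module preserves finite generation over the invariant ring. One should also take care in the initial reduction that the $L$-multiplicities computed on the $(\g,K)$-module $S(\p^{+})\otimes W$ coincide with the Hilbert-space multiplicities of $\pi|_L$; this is exactly what the cited results of \cite{Kb1} and \cite{DV} secure, and in the admissible case the two manifestly agree since the isotypic components are then finite dimensional and hence closed.
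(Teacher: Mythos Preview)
The paper does not prove this statement at all: it is recorded as a \emph{Fact}, attributed to \cite{Kb1} and \cite{DV}, and is immediately applied without further argument. So there is no ``paper's own proof'' to compare your proposal against.

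Your argument is correct and essentially self-contained. The reduction from $L$-admissibility of $\pi$ to finite $L$-multiplicities in $S(\p^{+})$ is carried out cleanly (using that $W$ is finite dimensional and that the $K$-finite vectors of a holomorphic discrete series are $K$-isomorphic to $S(\p^{+})\otimes W$, which the paper states just before the Fact). The forward implication via powers of a homogeneous invariant is the standard one. For the converse you invoke the module version of Hilbert's finiteness theorem for reductive groups: since $L_\C$ is reductive and $S(\p^{+})\otimes V_\sigma^{*}$ is a finitely generated $S(\p^{+})$-module with compatible $L_\C$-action, its invariants are finitely generated over $S(\p^{+})^{L_\C}=\C$; this is exactly the right tool and yields finite multiplicity of every $\sigma$. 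Your closing remark about matching Hilbert-space multiplicities with multiplicities on the $(\g,K)$-module is the one genuine subtlety, and you correctly flag that this is what the citations to \cite{Kb1}, \cite{DV} provide. In short, you have supplied a valid proof where the paper only gives a reference.
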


We apply  this criterium to understand admissible restriction to the subgroup $H\cap T =\exp(\mathbb R i Z_\h)$ where $H$ is locally isomorphic to $SL_2(\mathbb R)$ as in Theorem \ref{prop:res_H} and \eqref{eqn:KS-triple}.

\begin{prop} \label{prop:res to H cap T}
A holomorphic discrete series representation $(\pi, V)$ of $G$  restricted to the subgroup
$H\cap T=\exp(\mathbb R iZ_\h)$ is admissible if and only if one of the two following condition holds,
\begin{enumerate}
\item[(i)] $\beta_\ell (Z_\h)$ is positive, or
\item[(ii)] $\beta_\ell(Z_\h) $ is negative and $ \beta_j(Z_\h)=0$ for all $1\leq  j \leq \ell-1.$
\end{enumerate}
\end{prop}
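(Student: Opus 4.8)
The plan is to apply the invariance criterion of Fact~\ref{fact:res_L} to the group $L=H\cap T=\exp(\R i\z)$ and to translate it into a sign condition on the numbers $\{\gamma(\z):\gamma\in\Psi_n\}$. Since $i\z\in\t$ generates $L$, the adjoint action on a root vector $X_\gamma\in\p^+$ ($\gamma\in\Psi_n$) is by the character $t\mapsto e^{it\gamma(\z)}$, so a monomial $\prod_\gamma X_\gamma^{a_\gamma}$ is $L$-fixed precisely when $\sum_\gamma a_\gamma\gamma(\z)=0$. Hence $S(\p^+)^L=\C$ if and only if this equation has no nontrivial solution in nonnegative integers. Because the values $\gamma(\z)$ are integers, an elementary argument shows that this holds exactly when all of them are strictly positive, or all are strictly negative (if two values had opposite signs one could clear denominators to produce a relation, and a zero value gives one at once).

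Next I would use the Hermitian structure to locate the extreme values. Because $G/K$ is Hermitian, every $\gamma\in\Psi_n$ has the form $\gamma=\beta_\ell+\sum_{j<\ell}n_j\beta_j$ with $n_j\in\Z_{\ge 0}$, the coefficient of the unique noncompact simple root $\beta_\ell$ being always $1$; and the inequality in \eqref{eqn:KS-triple} gives $\beta_j(\z)\ge 0$ for $1\le j\le\ell-1$. Therefore $\gamma(\z)=\beta_\ell(\z)+\sum_{j<\ell}n_j\beta_j(\z)\ge\beta_\ell(\z)$, with equality for $\gamma=\beta_\ell$, so $\beta_\ell(\z)$ is the minimum of the values and $\beta_M(\z)$ their maximum. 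The ``all positive'' alternative is thus equivalent to $\beta_\ell(\z)>0$, which is exactly (i). This already settles the two easy implications: (i)$\Rightarrow$admissible, and (ii)$\Rightarrow$admissible, since under (ii) one has $\gamma(\z)=\beta_\ell(\z)<0$ for every $\gamma\in\Psi_n$.

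The remaining, and most delicate, point is the converse in the negative case: assuming admissibility with $\beta_\ell(\z)<0$, I must deduce $\beta_j(\z)=0$ for all $j<\ell$, that is, (ii). By the first paragraph, admissibility together with $\beta_\ell(\z)<0$ forces \emph{every} value to be negative, in particular the largest, $\beta_M(\z)=\beta_\ell(\z)+\sum_{j<\ell}c_j\beta_j(\z)<0$. It is tempting to conclude the vanishing of each $\beta_j(\z)$ by climbing the root string from the lowest weight $\beta_\ell$ up to $\beta_M$ inside the irreducible $\k$-module $\p^+$, accumulating the positive contribution of any $\beta_{j_0}(\z)>0$ until a nonnegative value of $\gamma(\z)$ is reached. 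However, this naive argument is not conclusive, since $\beta_M(\z)$ can be strictly negative even when some $\beta_j(\z)>0$ (this already occurs in low rank).

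I therefore expect the real content to be showing that, for an admissible characteristic with $\beta_\ell(\z)<0$, negativity genuinely forces $\z$ to be a negative multiple of the central element $Z_\ell$ of $\k$ (equivalently $\beta_j(\z)=0$ for $j<\ell$, recalling that all compact roots vanish on $Z_\ell$, so $Z_\ell$ acts on $\p^+$ by the single scalar $1$). This is the main obstacle, and I anticipate it requires more than the combinatorics above: one should exploit that $\z=[E_\h,F_\h]$ is a genuine KS-characteristic, together with the explicit weights of $\p^+$ (equivalently the shape of $\beta_M=\sum_j c_j\beta_j$) supplied by the classification recalled in Section~4, most likely through a short case analysis along the Hermitian types. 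Once this step is in place, the equivalence of admissibility with the disjunction (i) or (ii) follows immediately from the sign dichotomy established in the first two paragraphs.
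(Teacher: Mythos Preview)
Your setup via Fact~\ref{fact:res_L}, the reduction of $S(\p^+)^{H\cap T}=\C$ to the nonexistence of a nontrivial relation $\sum_\gamma a_\gamma\gamma(\z)=0$ with $a_\gamma\in\Z_{\ge0}$, and the chain of inequalities $\beta_\ell(\z)\le\gamma(\z)\le\beta_M(\z)$ coming from $\gamma=\beta_\ell+\sum_{j<\ell}n_j\beta_j$ and $\beta_j(\z)\ge0$, all coincide with the paper's argument. The converse implication ((i) or (ii) $\Rightarrow$ admissible) is handled identically.

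Where you diverge is the direct implication when $\beta_\ell(\z)<0$. You reduce to showing ``all $\gamma(\z)<0\Rightarrow\beta_j(\z)=0$ for every $j<\ell$'' and, not finding a combinatorial argument, you leave this open and propose invoking the KS-triple structure of $\z$ or a type-by-type check along the Hermitian classification. The paper does neither. It argues directly: if $\beta_\ell(\z)<0$ and $\beta_k(\z)>0$ for some $k<\ell$, then the $T$-weights
\[
\nu_n \;=\; n\bigl(-\beta_\ell(\z)\,\beta_k \;+\; \beta_k(\z)\,\beta_\ell\bigr),\qquad n=0,1,2,\dots,
\]
all satisfy $\nu_n(\z)=0$, and the corresponding weight vectors give an infinite-dimensional space of $(\t\cap\h)$-weight zero, contradicting admissibility. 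So the step you single out as ``the main obstacle'' is dispatched in the paper by a single explicit construction, with no classification and no use of $[E_\h,F_\h]=\z$.

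Your caution is not misplaced, however: $\beta_k$ is a \emph{compact} simple root, so $\nu_n$ is not automatically a $T$-weight of $S(\p^+)$ (whose weights are nonnegative integer combinations of \emph{noncompact} positive roots); indeed it fails to be one whenever $\beta_k$ is not adjacent to $\beta_\ell$ in the Dynkin diagram. Thus the paper's one-line argument at this point is at best elliptical, and your instinct that the step requires more than what is written is reasonable---even though the paper presents it as immediate and does not take the route you anticipate.
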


\begin{proof}
 Indeed, the weights of $\t_\mathbb C$ in $S(\p^+)$ are     $\sum_{\gamma \in \Psi_n} c_\gamma \gamma$ with $c_\gamma$ a nonnegative integer for every $\gamma.$ Any root $\gamma \in \Psi_n$ is equal to $\beta_\ell $ plus a linear combination with non negative coefficients of the roots $\beta_1, \dots, \beta_{\ell-1}.$ Thus, owing to  \eqref{eqn:simple roots},  we have the inequalities
$$
\beta_\ell(Z_\h) \leq \gamma (Z_\h) \leq \beta_M (Z_\h), \text{for}\,  \gamma  \in \Psi_n.
$$
Hence,
$$
(\sum_{\gamma \in \Psi_n} c_\gamma) \beta_\ell (Z_\h) \leq \sum_{\gamma \in \Psi_n} c_\gamma \gamma (Z_\h) \leq (\sum_{\gamma \in \Psi_n} c_\gamma) \beta_M (Z_\h)
$$
Thus, according to the Fact \ref{fact:res_L}, the restriction of $\pi$ to $\exp(\mathbb R iZ_\h)$ is admissible if and only if
$$
\sum_{\gamma \in \Psi_n} c_\gamma \gamma (Z_\h) \not= 0 \qquad \text{for\, every } \sum_{\gamma \in \Psi_n} c_\gamma  > 0
$$
Hence, the converse implication in the proposition  is true.

For the direct implication,  if $\beta_\ell (Z_\h) =  0 $ then the  vectors $E_{\beta_\ell}^k \in S(\p^+)$, $k=1,2,\dots$,  span an infinite dimensional subspace of $\t\cap \h$-weight zero, which contradicts the admissible restriction of $\pi$  to
$T\cap H$. Thus, $\beta_\ell (Z_\h) \neq 0$. If $\beta_\ell (Z_\h) <  0 $ there are two possibilities: either  $\beta_k (Z_\h) >0$ for some $1 \leq  k \leq \ell-1$,  or $\beta_k (Z_\h)=0$ for all
$1 \leq k \leq \ell-1$. In the first situation we recall that $\beta_j (Z_\h) $ is a nonnegative integer for any $j \leq \ell-1$  and hence the space of vectors of $T$-weights
$\nu= n(-\beta_\ell (Z_\h) \beta_k + \beta_k (Z_\h) \beta_\ell)$, $n=0,1,2,\dots$, is infinite dimensional and consists of vectors of $\t\cap \h$-weight zero. This contradicts that $\pi$ is an admissible representation of $\t\cap \h$ and implies that the second situation holds.
\end{proof}

We observe that whenever  the situation (ii) holds,      there is another KS-triple which satisfies \eqref{eqn:KS-triple} for $\h$, namely, the triple $\{-Z_\h, F_\h, E_\h\}$. Now, $\beta_\ell (-Z_\h) >0$ and $\beta_j(-Z_\h)=0$,  $j=1, \dots \ell-1$, hence we are in situation (i).
Note that that both KS-triples generate the real Lie algebra $\h$.

For the particular element $Z_0$ defined in \eqref{def:Z0}, we obtain the following result in relation with Proposition \ref{prop:res to H cap T}.

\begin{lem} \label{lem:Z0}
Let $Z_0$ be as in \eqref{def:Z0}, then for every noncompact root $\beta$ in $\Psi$,  $\beta (Z_0) > 0$. In particular, for the simple noncompact root, $\beta_\ell (Z_0) > 0$.
\end{lem}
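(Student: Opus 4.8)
The plan is to reduce the statement to a one-line computation with inner products and then feed in the structure of the restricted root system attached to the strongly orthogonal set $S$. Since $Z_\gamma$ is the characteristic element of a KS-triple for $\gamma$, it is the unique multiple of the coroot $H_\gamma$ with $\gamma(Z_\gamma)=2$, and hence $\beta(Z_\gamma)=\frac{2(\beta,\gamma)}{(\gamma,\gamma)}$ for every root $\beta$. By linearity this gives
\[
\beta(Z_0)=\sum_{k=1}^r \beta(Z_{\gamma_k})=\sum_{k=1}^r \frac{2(\beta,\gamma_k)}{(\gamma_k,\gamma_k)}.
\]
Because $S=\{\gamma_1,\dots,\gamma_r\}$ is strongly orthogonal, the $\gamma_k$ are pairwise orthogonal; so if $\a$ denotes their real span and $\beta|_\a=\sum_k a_k\gamma_k$ is the orthogonal projection of $\beta$ onto $\a$, then $(\beta,\gamma_k)=a_k(\gamma_k,\gamma_k)$ and the sum collapses to $\beta(Z_0)=2\sum_{k=1}^r a_k$. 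Thus the lemma reduces to showing that the sum of the $S$-coordinates of $\beta|_\a$ is strictly positive for every $\beta\in\Psi_n$.

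To control the projection $\beta|_\a$ I would invoke the restricted root structure of a Hermitian symmetric pair (the Harish-Chandra cascade / Moore's theorem): for the strongly orthogonal set $S$, every noncompact positive root $\beta\in\Psi_n$ projects onto $\a$ either as $\tfrac12(\gamma_i+\gamma_j)$ for some $1\le i\le j\le r$, or, in the non-tube case, as $\tfrac12\gamma_i$. In the first case $\sum_k a_k=1$, giving $\beta(Z_0)=2$; in the second $\sum_k a_k=\tfrac12$, giving $\beta(Z_0)=1$. Either way $\beta(Z_0)\in\{1,2\}>0$ for all $\beta\in\Psi_n$, and the statement for the simple noncompact root $\beta_\ell$ is just the case $\beta=\beta_\ell$.

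The only genuine content, and the step I expect to be the main obstacle, is justifying that a \emph{noncompact positive} root never projects to a difference $\tfrac12(\gamma_i-\gamma_j)$ nor to $0$: those projections are exactly the ones produced by compact roots, respectively by roots orthogonal to $S$, and no noncompact root is orthogonal to $S$ (as already remarked after \eqref{eqn:S}). I would secure this either by quoting Moore's theorem directly, or self-containedly by noting that strong orthogonality makes $\{Z_{\gamma_k},E_{\gamma_k},F_{\gamma_k}\}_{k=1}^{r}$ a family of commuting $\mathfrak{sl}_2(\C)$-triples, decomposing $\p^+$ under $\bigoplus_k \mathfrak{sl}_2(\C)$, and reading off the possible $\a$-weights of $\p^+=\bigoplus_{\gamma\in\Psi_n}\g_\gamma$ from that decomposition. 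Once this structural input is in place, everything else is the elementary computation recorded above.
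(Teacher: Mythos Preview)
Your argument is correct but takes a longer route than the paper. You compute
\[
\beta(Z_0)=\sum_{k=1}^r \frac{2(\beta,\gamma_k)}{(\gamma_k,\gamma_k)}
\]
exactly as the paper does, and then invoke the restricted root structure (Moore's theorem) to pin down $\beta(Z_0)\in\{1,2\}$. The paper bypasses this classification entirely: since $\Psi$ is holomorphic and $\beta,\gamma_k\in\Psi_n$, the sum $\beta+\gamma_k$ is never a root, so by the elementary root-string argument $(\beta,\gamma_k)\ge 0$ for every $k$; and since no noncompact root is orthogonal to all of $S$ (the ``spanning'' property already recorded after the definition of $S$), at least one summand is strictly positive. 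That is the whole proof. Your approach has the advantage of yielding the precise values $\beta(Z_0)\in\{1,2\}$, which the paper only extracts later and by other means, but it imports a heavier structural result where a two-line root-system observation suffices.
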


\begin{proof}
Since the system $\Psi$ is holomorphic, $\beta + \gamma_j$ never is a root. Hence, for every $j$, we have  $\frac{2(\beta, \gamma_j)}{(\gamma_j, \gamma_j)} \geq 0.$ By construction, $S$ is a strongly orthogonal spanning set. Hence,  $\beta$ is not orthogonal to some root in $S$, which shows the claim.
\end{proof}

Considering Lema \ref{lem:Z0}, Proposition \ref{prop:res to H cap T} and Fact \ref{fact:Kb1}, we obtain  the first statement of Theorem \ref{prop:H=H0} as a corollary.

\begin{cor}
Let $(\pi,V)$ be a holomorphic discrete series and $H_0$ the real subgroup of $G$ associated to the KS-triple as in \eqref{def:Z0}, then $\pi$ restricted to $H_0 \cap T$ is admissible. Therefore, the restriction of $\pi$ to $H_0$ is admissible.
\end{cor}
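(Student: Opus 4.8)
The plan is to obtain the corollary by chaining together Lemma~\ref{lem:Z0}, Proposition~\ref{prop:res to H cap T}, and Fact~\ref{fact:Kb1}, with essentially no new computation. First I would record that $\{Z_0,E_0,F_0\}$ is, by its very construction in \eqref{def:Z0}, the KS-triple attached to $H_0$, so that $Z_0$ plays the role of the characteristic vector $Z_\h$ in \eqref{eqn:KS-triple} and $H_0\cap T=\exp(\mathbb R iZ_0)$. Lemma~\ref{lem:Z0} gives $\beta_\ell(Z_0)>0$, which is precisely condition~(i) of Proposition~\ref{prop:res to H cap T}. Applying that proposition with $Z_\h=Z_0$ shows that the restriction of $\pi$ to $H_0\cap T=\exp(\mathbb R iZ_0)$ is admissible.

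To pass from the compact torus $H_0\cap T$ to the full group $H_0$, I would invoke Fact~\ref{fact:Kb1} with $L=H_0$, which requires checking its hypotheses. The group $H_0$ is a closed connected subgroup (it is the image of $SL_2(\mathbb R)$), hence has finitely many components, and it is invariant under the Cartan involution. The key identification is $H_0\cap K=H_0\cap T$: since $iZ_0\in\t\subset\k$ while $E_0,F_0\in\p_\C$, the Cartan decomposition of $\h_0$ reads $\h_0=(\h_0\cap\k)\oplus(\h_0\cap\p)$ with $\h_0\cap\k=\mathbb R iZ_0$, so the maximal compact subalgebra of $\h_0$ is $\mathbb R iZ_0$ and therefore $H_0\cap K=\exp(\mathbb R iZ_0)=H_0\cap T$. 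Since the restriction of $\pi$ to $H_0\cap K$ is admissible by the previous step, Fact~\ref{fact:Kb1} yields that the restriction of $\pi$ to $H_0$ is admissible, which is the assertion of the corollary.

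I do not expect any analytic difficulty here: all of the substance already lives in Lemma~\ref{lem:Z0} (the positivity $\beta(Z_0)>0$, resting on the strong orthogonality of $S$ and holomorphy of $\Psi$) and in the $S(\p^+)$-invariant criterion underlying Proposition~\ref{prop:res to H cap T}. The only point that genuinely demands care is verifying that $H_0\cap K=H_0\cap T$ really is a maximal compact subgroup of $H_0$, so that Kobayashi's criterion in Fact~\ref{fact:Kb1} is applicable; this is immediate from the placement $iZ_0\in\k$ and $E_0,F_0\in\p_\C$ in the KS-triple, and is the step I would write out most explicitly.
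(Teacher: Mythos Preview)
Your proposal is correct and follows exactly the approach the paper intends: the paper itself simply states that the corollary is obtained by combining Lemma~\ref{lem:Z0}, Proposition~\ref{prop:res to H cap T}, and Fact~\ref{fact:Kb1}, without writing out a separate proof. Your additional verification that $H_0\cap K=H_0\cap T$ via the placement $iZ_0\in\k$, $E_0,F_0\in\p_\C$ is a welcome clarification of a point the paper leaves implicit.
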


 Let $\h$ a copy of $\mathfrak{sl}_2(\R)$ as subalgebra of $\g$ whose complexification is generated by the triple  \eqref{eqn:KS-triple}. Henceforth, we assume $\z$ satisfies the hypothesis of Proposition \ref{prop:res to H cap T}.  Then,  any  holomorphic irreducible square integrable representation $(\pi, V)$ restricted to   $\exp(i\mathbb R Z_\h) = H\cap T$ is an admissible representation.
Owing to Fact \ref{fact:res_L}, $\pi$ is an  admissible representation of $H$.   To continue to the proof of Theorem \ref{prop:H=H0}, we show the following.

\begin{prop}   \label{prop:Z=Z0}
Let $Z_\h$ be a characteristic vector  as in \eqref{eqn:KS-triple} and assume that $\beta_\ell (Z_\h) $ is positive. Then, $Z_\h= Z_0$ and $H$ is conjugated  to $H_0$ by an element of  $G$.
\end{prop}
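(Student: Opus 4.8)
The plan is to convert the numerical hypothesis $\beta_\ell(\z)>0$ into a statement about the nilpositive element $E_\h$, to identify the $K_\C$-orbit of $E_\h$ inside $\p^+$, and then to read off $\z$ from that orbit. First I would locate $E_\h$. The relation $[\z,E_\h]=2E_\h$ says that $E_\h$ lies in the $\operatorname{ad}(\z)$-eigenspace for the eigenvalue $2$ in $\p_\C=\p^+\oplus\p^-$; on $\p^+$ this eigenspace is spanned by the $E_\gamma$ with $\gamma(\z)=2$ and on $\p^-$ by the $E_{-\gamma}$ with $\gamma(\z)=-2$. Exactly as in the proof of Proposition \ref{prop:res to H cap T}, every $\gamma\in\Psi_n$ satisfies $\gamma(\z)\ge\beta_\ell(\z)>0$, so no weight of $\p^-$ equals $2$; hence $E_\h\in\p^+$. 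This is the one step that uses positivity of $\beta_\ell(\z)$, and it is precisely the hypothesis one cannot drop in case (ii) of Proposition \ref{prop:res to H cap T}, which was instead normalized by passing to the triple $\{-\z,F_\h,E_\h\}$.

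Next I would invoke the classification of $K_\C$-orbits on $\p^+$ for a Hermitian pair (Kor\'anyi--Wolf): since $\p^+$ is abelian every element is nilpotent, and the nonzero orbits are $K_\C\cdot E^{(k)}$ with $E^{(k)}=E_{\gamma_1}+\dots+E_{\gamma_k}$, $1\le k\le r$, indexed by the rank $k$ of the element. As $E_\h\neq 0$, it is $K_\C$-conjugate to some $E^{(k)}$. The triple $\{Z^{(k)},E^{(k)},F^{(k)}\}$ with $Z^{(k)}=Z_{\gamma_1}+\dots+Z_{\gamma_k}$ is a KS-triple exactly as in \eqref{def:Z0}, and one checks $Z^{(k)}$ is $\Psi_c$-dominant. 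Because the characteristic vector is a $K_\C$-conjugacy invariant determined by the orbit of the nilpositive element and is unique among $\Psi_c$-dominant representatives (Kostant; Theorem 9.4.4 of \cite{CM}), and $\z$ is $\Psi_c$-dominant by \eqref{eqn:KS-triple}, I would conclude $\z=Z^{(k)}$.

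It then remains to show that $\beta_\ell(Z^{(k)})>0$ forces $k=r$. Writing $\beta_\ell(Z^{(k)})=\sum_{i=1}^{k}\frac{2(\beta_\ell,\gamma_i)}{(\gamma_i,\gamma_i)}$, each summand is a nonnegative integer by holomorphicity of $\Psi$, exactly as in Lemma \ref{lem:Z0}. The crux is that this integer vanishes for $i<r$ and is positive for $i=r$: a positive value would force $\beta_\ell-\gamma_i$, equivalently the compact combination $\gamma_i-\beta_\ell$, to be a root. In the tube cases this is immediate, since there $\gamma_r=\beta_\ell$, so $\beta_\ell$ is strongly orthogonal to $\gamma_1,\dots,\gamma_{r-1}$ by the very construction of the set $S$ in \eqref{eqn:S}, while $\frac{2(\beta_\ell,\gamma_r)}{(\gamma_r,\gamma_r)}=2$. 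In the non-tube cases I would check directly that $\gamma_i-\beta_\ell$ is not a root for $i<r$, using the explicit strongly orthogonal sequences of Harish-Chandra (equivalently, the tables of \cite{Dk2}, \cite{Oh}). I expect this last verification to be the main obstacle, as it is the only point that is not purely formal; granting it, the hypothesis $\beta_\ell(\z)>0$ yields $k=r$ and hence $\z=Z^{(r)}=Z_0$.

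Finally, from $\z=Z_0$ Theorem 9.4.4 of \cite{CM} gives that $\h_\C$ is $K_\C$-conjugate to $(\h_0)_\C$. To descend to the real groups I would use that both triples are stable under the conjugation $X\mapsto\bar X$ defining $\g$ (indeed $\bar{E_\h}=F_\h$ and $\bar{E_0}=F_0$): since the neutral elements already agree, $E_\h$ and $E_0$ are $K_\C$-conjugate nilpotent elements of $\p^+$ with the same $\z=Z_0$, and a standard real-form argument shows that the corresponding real forms are conjugate under $Z_K(Z_0)\subset G$. This produces an element of $G$ carrying $H$ to $H_0$, completing the proof.
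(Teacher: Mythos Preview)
Your plan is correct and reaches the conclusion, but it diverges from the paper at the decisive step. Once $E_\h\in\p^+$ is established and is $K_\C$-conjugate to some $E^{(k)}=E_{\gamma_1}+\cdots+E_{\gamma_k}$, the paper does \emph{not} first identify $\z=Z^{(k)}$ and then evaluate $\beta_\ell(Z^{(k)})$. Instead it proves a structural lemma (Lemma~\ref{lem:centralizador h}): since $\beta_\ell(\z)>0$ forces $\gamma(\z)>0$ for every $\gamma\in\Psi_n$, the centralizer $\g_\C^{\h}$ lies entirely in $\k_\C$. Combining this with the Barbasch--Vogan decomposition $\g_\C^{E_\h}=\g_\C^{\h}\oplus\u$, one sees that if $k<r$ then the conjugate of the semisimple subalgebra $\mathfrak{sl}_2(\gamma_r)$ centralizes $E_\h$, hence sits in $\g_\C^{\h}$, yet meets $\p_\C$ nontrivially---a contradiction. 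This argument is uniform over all Hermitian pairs and requires no case analysis.

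Your route trades the Barbasch--Vogan input for an explicit computation of $\beta_\ell(Z^{(k)})=\sum_{i\le k}\langle\beta_\ell,\gamma_i^\vee\rangle$, reducing the claim $k=r$ to the orthogonality $(\beta_\ell,\gamma_i)=0$ for $i<r$. As you observe, this is immediate in the tube case (where $\gamma_r=\beta_\ell$) and must be checked by hand in the three non-tube families; it does hold there, so your argument goes through, but this case-by-case verification is exactly what the centralizer argument avoids. Two smaller points: the $\Psi_c$-dominance of $Z^{(k)}$ that you need for the identification $\z=Z^{(k)}$ is true but deserves a line of justification rather than ``one checks''; and for the passage from $K_\C$-conjugacy of the triples to $G$-conjugacy of $H$ and $H_0$, the paper invokes the injectivity of the map from $K$-classes of KS-triples to $K_\C$-classes of normal triples established in \cite{Dk1}, which is more precise than the ``standard real-form argument'' you allude to.
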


For groups of type $EIII,EVII$ the equality $Z_\h=Z_0$ follows by inspection of tables $X,XIII$ in \cite{Dk1}. We proceed in a proof for all groups $G$, for this we need some other results to start with.

We recall a result of Barbasch-Vogan  (Lemma 3.7.3 in \cite{CM}) on the centralizer $\g_\C^{E_\h}$ of $E_\h$ in $\g_\C.$ Let $\g_\C^\h$ denote the centralizer of $\h$ in $\g_\C$. Thus, $\g_\C^\h$ is a reductive subalgebra of $\g_\C.$ Let $\u$ denote the nilpotent radical of $\g_\C^{E_\h}$, then the result of Barbasch-Vogan stablishes  the direct sum decomposition
  $$
  \g_\C^{E_\h}= \g_\C^\h + \u
  $$
 Since $\h$ is invariant under the Cartan involution associated to the decomposition $\g=\k +\mathfrak p$, we have that    $\g_\C^\h$ is also invariant under the Cartan involution. Moreover, $\g_\C^\h$ is invariant under the conjugation of $\g_\C$ with respect to the real algebra $\g.$ Therefore, we have the decompositions,
$$
 \g_\C^\h = \g_\C^\h \cap \k_\C + \g_\C^\h \cap \mathfrak p_\C \qquad \qquad
\g_\C^\h = (\g_\C^\h \cap \g) + i (\g_\C^\h \cap \g)
 $$
and
 $$
\g_\C^\h \cap \g = \g_\C^\h \cap \k + \g_\C^\h \cap \mathfrak p
$$
In particular, we can conclude the following.

\begin{lem} \label{lem:centralizador h}
Let $\h$ satisfying the conditions of Proposition \ref{prop:res to H cap T}, then  $\g_\C^\h$ is a subalgebra of $\k_\C$.
\end{lem}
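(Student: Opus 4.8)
The plan is to reduce everything to the vanishing of the noncompact part of the centralizer, that is, to the assertion $\g_\C^\h\cap\p_\C=0$. Once this is known, the decomposition $\g_\C^\h=(\g_\C^\h\cap\k_\C)+(\g_\C^\h\cap\p_\C)$ recorded just above the statement gives $\g_\C^\h=\g_\C^\h\cap\k_\C\subseteq\k_\C$ immediately. So the whole lemma rests on controlling the $\p_\C$-component.

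First I would reduce to case (i) of Proposition \ref{prop:res to H cap T}. The subalgebra $\g_\C^\h$ depends only on $\h$ and not on the particular KS-triple chosen to generate it; and by the remark following Proposition \ref{prop:res to H cap T}, if $\z$ falls under situation (ii) then $\{-Z_\h,F_\h,E_\h\}$ is another KS-triple generating the same real algebra $\h$, whose characteristic vector $-Z_\h$ now satisfies situation (i). Hence I may assume without loss of generality that $\beta_\ell(Z_\h)>0$.

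The core is then a one-line weight computation. Since $Z_\h\in i\t$, the operator $\mathrm{ad}(Z_\h)$ acts on each root space $\g_\gamma$ by the scalar $\gamma(Z_\h)$, and $\p_\C=\bigoplus_{\gamma\in\Phi_n}\g_\gamma$. Any $X\in\g_\C^\h\cap\p_\C$ commutes in particular with $Z_\h$; writing $X=\sum_{\gamma\in\Phi_n}X_\gamma$ with $X_\gamma\in\g_\gamma$, the relation $[Z_\h,X]=\sum_\gamma\gamma(Z_\h)X_\gamma=0$ forces $X_\gamma=0$ whenever $\gamma(Z_\h)\neq0$. Thus it suffices to check that $\gamma(Z_\h)\neq0$ for every $\gamma\in\Phi_n$. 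But this is exactly the inequality already used in the proof of Proposition \ref{prop:res to H cap T}: each $\gamma\in\Psi_n$ is $\beta_\ell$ plus a nonnegative integral combination of $\beta_1,\dots,\beta_{\ell-1}$, and since $\beta_j(Z_\h)\geq0$ for $j\leq\ell-1$ we obtain $\gamma(Z_\h)\geq\beta_\ell(Z_\h)>0$. As the noncompact roots are precisely the $\pm\gamma$ with $\gamma\in\Psi_n$, we get $\gamma(Z_\h)>0$ on $\Psi_n$ and $\gamma(Z_\h)<0$ on $-\Psi_n$; in either case $\gamma(Z_\h)\neq0$. Therefore $\g_\C^\h\cap\p_\C=0$ and the lemma follows.

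I do not anticipate a serious obstacle; the only step demanding care is the reduction to situation (i), because in situation (ii) the characteristic vector itself has $\beta_\ell(Z_\h)<0$ and the weight estimate would point the wrong way. It is essential there to pass to the triple $\{-Z_\h,F_\h,E_\h\}$ and to invoke that $\h$, and hence $\g_\C^\h$, is unchanged. Everything else is the routine observation that strict nonvanishing of $\gamma(Z_\h)$ on $\Phi_n$ removes the entire noncompact part from the centralizer of $Z_\h$, a fortiori from the centralizer of $\h$.
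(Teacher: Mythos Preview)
Your proof is correct and follows essentially the same line as the paper's: both arguments observe that $\g_\C^\h$ is contained in the centralizer of $Z_\h$, which is $\t_\C\oplus\bigoplus_{\beta(Z_\h)=0}\g_\beta$, and then check that no noncompact root vanishes on $Z_\h$, so the centralizer sits inside $\k_\C$. The only minor difference is that you first reduce to case~(i) before running the weight estimate, whereas the paper handles both cases at once; indeed in case~(ii) one has $\beta_j(Z_\h)=0$ for $j\le\ell-1$, so $\gamma(Z_\h)=\beta_\ell(Z_\h)<0$ for every $\gamma\in\Psi_n$ directly, and the reduction, while harmless, is not actually needed.
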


\begin{proof}Obviously, we have
$$
\g_\C^{\h}
\subset
\t\oplus\sum_{\underset{\beta(Z_\h)=0}{\pm\beta \in \Psi}} \g_\beta
$$
Under our hypothesis, Proposition \ref{prop:res to H cap T} says that the roots that vanish on $Z_\h$ are  compact. Whence,
$$
\g_\C^{\h} \subset
\t\oplus\sum_{\underset{\beta(Z_\h)=0}{\pm\beta \in \Psi_c}} \g_\beta
\subset \k_\C
$$
Thus, the proof of the lemma is completed.
\end{proof}

We now complete the proof of Proposition \ref{prop:Z=Z0}.  We write $E_\h=\sum_{\gamma \in \Phi_n} c_\gamma E_\gamma$ and
 $\supp(E_\h)=\{ \gamma \in \Phi_n: c_\gamma \not= 0 \}$. We verify that $\supp(E_\h) \subset \Psi_n.$ This follows from, first  $\gamma =\pm (\beta_\ell + \sum_{1 \leq j \leq \ell-1} n_j \beta_j)$ where $n_j \geq 0$ for all $j$, and second $[Z_\h ,E_\h]= 2 E_\h$.

As before, $\mathfrak p^+ =\sum_{\gamma \in \Psi_n} \g_\gamma$. Since, $\Psi$ is holomorphic, we have $\mathfrak p^+$ is invariant under the group $\Ad(K_\C)$.
We now show that the orbit $\Ad(K_\C) E_\h$ is open in $\mathfrak p^+.$ For this, we recall the classification of the orbits of $K_\C$ in $\mathfrak p^+$ due to \cite{RRS}.
Let $S=\{\gamma_1, \dots, \gamma_r \}$ be the Harish-Chandra set of strongly orthogonal roots of $\Psi_n$ given in \eqref{eqn:S}. Then a set of representatives of the $K_\C$-orbits in $\mathfrak p^+$ is
 $$
  E_{\gamma_1} +\dots +E_{\gamma_s} \qquad s=0,1,\dots,r
  $$
There is only one open orbit, which corresponds to $s=r.$ Thus, there exists a $k \in K_\C$ and $1\leq s \leq r$ so that
$$
E_\h = \Ad(k)( E_{\gamma_1} +\dots +E_{\gamma_s})
$$
If it were $s<r,$ then the complex simple algebra $\Ad(k)(\mathfrak{sl}_2(\gamma_r))$ would be contained in the centralizer of $E_\h$ and because of the result of Barbash-Vogan quoted previously, we have  $\Ad(k)(\mathfrak{sl}_2(\gamma_r)) \subset \g_\C^\h.$
 Since, $\gamma_r$ is a noncompact root, and $k \in K_\C$ we would conclude that  $\g_\C^\h \cap \mathfrak p_\C$ has positive dimension, which contradicts Lemma \ref{lem:centralizador h}.
Therefore $s=r$ and the orbit $\Ad(K_\C)E_\h$ is open in $\mathfrak p^+$.
As a consequence we have found an element  $k\in K_\C$ such that
$$
\Ad(k^{-1})E_\h = E_{\gamma_1} +\dots +E_{\gamma_r}=E_0
$$
 Since, the triple $\{\Ad(k)Z_\h, \Ad(k)E_\h=E_0, \Ad(k)F_\h\}$ is normal, applying Theorem 9.4.3 in \cite{CM}, there is $k' \in K_\C$ such that
 $$
 \Ad(k')Z_\h =Z_0 \qquad \text{and}\qquad \Ad(k')E_\h=E_0
 $$
Finally we obtain the KS-triples $(Z_\h , E_\h , F_\h)$, $(Z_0 , E_0 , F_0)$ are  $K_\C$-conjugated. The obvious  map $\psi_0$ from the set of $K_0$-conjugacy classes of KS-triples into the  set of  $K_\C$-conjugacy classes of normal triples is shown to be injective in \cite{Dk1}. Hence,  there exists
$k \in K_0$ which carries $Z_\h$ onto $Z_0$. Since both vectors are $\Psi$-dominant, it follows that they are equal. Also, $k$ carries $H$ onto $H_0$. This ends the proof of Proposition \ref{prop:Z=Z0}.

The proof of the second part of  Theorem \ref{prop:H=H0} follows straightforward from Lemma \ref{lem:Z0} and Proposition \ref{prop:Z=Z0}.

The Hermitian symmetric space $G/K$ is a \textit{tube domain} if it is biholomorphic to a  tube domain. It is well known that $G/K$ is a tube domain if and only if the characteristic vector $Z_0$ defined in \eqref{def:Z0}, is in the center of $\k_\C$.
 
\begin{rmk}  \label{cal: betaz0}
We have $\beta_\ell(Z_0)=2$ and $ \beta_j(Z_0)=0$, for all $1\leq  j \leq \ell-1$,  if and only if $G/K$ is a tube  domain.  Whenever, $G/K$ is not a tube domain,   we have $\beta_\ell(Z_\h)=1$,  and  $\beta(Z_0)=0$ for all the   compact simple roots but one for which we have $\beta(Z_0)=1.$ Indeed, for a holomorphic system it happens  that for any $X$ is the center of $\k$ the value $\beta(X)=\beta_\ell (X)$ for any  $\beta \in \Psi_n.$ Also, by construction,  $\beta_M \in S $ which yields $\beta_M(Z_0)=2$. Thus, if $Z_0 $ belongs to the center of $\k$ we have $\beta(Z_0)=2$ for every root in $\Psi_n,$ which gives $\beta_j (Z_0)=0$ for every compact simple root. Certainly, the hypothesis $\beta(Z_0)=0$ for every compact simple root, together with $\Psi$ holomorphic yields $Z_0$ lies in the center of $\k.$ The hypothesis $\beta_\ell(Z_0)=1$ yields $Z_0$ is not in the center of $\k$ which is equivalent to $G/K$ is not a tube domain. When $\beta_\ell(Z_0)=1,$ since $\beta_M(Z_0)=2$ and that the multiplicity of $\beta_\ell$ in $\beta_M$ is one, we obtain that $\beta_j(Z_0)=1$ for exactly one compact simple root and  the root $\beta_j$ has multiplicity one in the maximal root.
\end{rmk}

\begin{rmk}
 In \cite{Ha}, is shown a necessary condition for a nilpotent orbit to be in the wave front cycles of a tempered representation. More precisely, let $L$ be the Levi subgroup of the centralizer of an element of a nilpotent  orbit in $\g$ and $C(G)$ the center of $G$, the necessary conditions is that $L/C(G)$ is be compact.  Certainly the real nilpotent orbit $\mathcal O_0$ that corresponds via the Kostant-Sekiguchi map to the $K_\C$-orbit $E_0$ is contained in the wave front set of any holomorphic discrete series.  We have observed in a case by case computation for $BDI, EIII, EVII$ that   dimension of $\mathcal O_0$ is minimal among the values of dimension of the real nilpotent orbits that satisfy the condition $L/C(G)$ is compact.
\end{rmk}

\section{Explicit examples} \label{sec:examples}

In this section, for each  Hermitian symmetric pair,   we give the necessary data in order to produce an explicit example of  each KS-triple $\{Z_0, E_0, F_0\}$ as well as the values  $\beta(Z_0)$   for each simple root for the holomorphic system $\Psi$.

In \cite{Oh} is pointed out an explicit realization of the classical real Lie algebras we are dealing with as a subalgebra of a convenient $\mathfrak{su}(a,b)$. These realization have the property that a compactly embedded Cartan subalgebra of the algebras of our interest, consists of the totality of diagonal matrices in the subalgebra. For each example on classical Lie algebras, we point out the algebra $\t_\C$, a holomorphic system $\Psi$, the Harish-Chandra set $S$ as in \ref{eqn:S}, the vector $Z_0$ as in \eqref{def:Z0}, the weights $\beta_j(Z_0), j=0,\dots,\ell $, for all $\beta_j$ in  \eqref{eqn:simple roots}, the weighted Vogan diagram that correspond to the $K_\C$-orbit of $E_0$ (see \cite{Ga}) and the signed Young diagram that corresponds to $E_0$.

From the tables in \cite{Dk1}, we also present on exceptional Lie algebras the Harish-Chandra set $S$ and  the  weighted Vogan diagram associated to the orbit of $E_0$.

\

{\bf AIII, $\mathfrak{su}(p,q)$, $p< q.$}

 In this case
 $$
 \t_\mathbb C=\left\{ D=\diag(h_1, \dots, h_p; k_1, \dots, k_q) : \sum h_j +\sum k_s=0\right\}.
 $$
 We set $\epsilon_j(D)= h_j, \delta_r(D)=k_r.$ Then for a holomorphic system $\Psi$ we choose
 $$
 \Psi_c=\{ \epsilon_r -\epsilon_s, \delta_i -\delta_j, r<s, i<j\} \quad \Psi_n=\{ \epsilon_i -\delta_j, 1 \leq i \leq p, 1 \leq j \leq q \}.
 $$
 The noncompact simple root is $\beta_p =\epsilon_p -\delta_1,$ other simple root we need  is $\beta_q= \delta_{q-p} -\delta_{q-p+1}.$

The Harish-Chandra strongly orthogonal spanning set is
$$S=\{\epsilon_{r} -\delta_{q-r+1}, 1 \leq r \leq p \}.$$
The characteristic vector
 $$
 Z_0 = \diag(1,\dots,1; 0,\dots,0,-1,\dots,-1) \quad \text{where $\pm 1$ repeats $p$  times.}
 $$
The weights $w_j=\beta_j(Z_0)$ are zero for roots other than $\beta_p,
\beta_q=\delta_{q-p} -\delta_{q-p+1}$.  $w_p=\beta_p(Z_0)=1$ and $w_q=\beta_q(Z_0)$ are equal one. Whence, the weighted Vogan diagram for the orbit $K_\C E_0$ is

\begin{center}
\setlength{\unitlength}{10pt}
\begin{picture}(15,3)(-6,-2)
  \thicklines
   \put(-12,0){\circle{.6}}
  \put(-9,0){\circle{.6}}
  \put(-6,0){\circle{.6}}
  \put(-3,0){\circle*{.6}}
  \put(0,0){\circle{.6}}
  \put(3,0){\circle{.6}}
  \put(6,0){\circle{.6}}
  \put(9,0){\circle{.6}}
  \put(12,0){\circle{.6}}
  \put(15,0){\circle{.6}}
  \put(-11.7,0){\line(3,0){2.5}}
  \put(-8.7,0){\,\, \dots}
  \put(-5.7,0){\line(3,0){2.5}}
  \put(-2.7,0){\line(3,0){2.5}}
  \put(0.3,0){\,\,\dots}
  \put(3.3,0){\line(3,0){2.5}}
  \put(6.3,0){\line(3,0){2.5}}
  \put(9.2,0){\,\,\,\dots}
  \put(12.2,0){\line(3,0){2.5}}

  \put(-13.3,-1.2){\small{$w_1\tiny{=}0$}}
  \put(-9,-1.2){\small{0}}
  \put(-6,-1.2){\small{0}}
  \put(-4.1,-1.2){\small{$w_p\tiny{=}1$}}
  \put(0,-1.2){\small{0}}
  \put(3,-1.2){\small{0}}
  \put(4.9,-1.2){\small{$w_q\tiny{=}1$}}
  \put(9,-1.2){\small{0}}
  \put(12,-1.2){\small{0}}
  \put(14.3,-1.2){\small{$w_{p+q-1}\tiny{=}0$}}


\end{picture}
\end{center}

 The signed Young diagram for $E_0$ is
$$
\begin{ytableau}
+ & - \cr
: & : \cr
: & : \cr
+ & - \cr
- \cr
: \cr
: \cr
- \cr
\end{ytableau}
$$
Here, there are $p$ rows   of   length   two and   $q-p$ rows of length one.

   \

{\bf AIII,    $\mathfrak{su}(p,q), p=q$.}

 $$
 \t_\mathbb C=\left\{ D=\diag(h_1, \dots, h_p; k_1, \dots, k_p) : \sum h_j +\sum k_s=0\right\}
 $$
 We set $\epsilon_j(D)= h_j, \delta_r(D)=k_r$, with $1 \leq j,r \leq p$. Then for a holomorphic system $\Psi$ we choose
 $$
 \Psi_c=\{ \epsilon_r -\epsilon_s, \delta_i -\delta_j, r<s, i<j\} \quad
 \Psi_n=\{ \epsilon_i -\delta_j, 1 \leq i,j \leq p \}.
 $$
 The noncompact simple root is $\beta_p =\epsilon_p -\delta_1$.\\
The Harish-Chandra strongly orthogonal spanning set is
$$S=\{\epsilon_{r} -\delta_{q-r+1}, 1 \leq r \leq p \}.$$
The characteristic vector is
$$
Z_0 = \diag(1,\dots,1; -1,\dots,-1) \quad \text{where $\pm 1$ repeats $p$  times}
$$
 Thus, the weights are $w_j=\beta_j(Z_0)=0$  except for $w_p=\beta_p (Z_0)=2 $. So, its weighted Vogan diagram is

\begin{center}
\setlength{\unitlength}{10pt}
\begin{picture}(9,3)(-6,-2)
  \thicklines
   \put(-12,0){\circle{.6}}
  \put(-9,0){\circle{.6}}
  \put(-6,0){\circle{.6}}
  \put(-3,0){\circle*{.6}}
  \put(0,0){\circle{.6}}
  \put(3,0){\circle{.6}}
  \put(6,0){\circle{.6}}
  \put(-11.7,0){\line(3,0){2.5}}
  \put(-8.7,0){\,\, \dots}
  \put(-5.7,0){\line(3,0){2.5}}
  \put(-2.7,0){\line(3,0){2.5}}
  \put(0.3,0){\,\,\dots}
  \put(3.3,0){\line(3,0){2.5}}

  \put(-12,-1.2){\small{0}}
  \put(-9,-1.2){\small{0}}
  \put(-6,-1.2){\small{0}}
  \put(-4.1,-1.2){\small{$w_p\tiny{=}2$}}
  \put(0,-1.2){\small{0}}
  \put(3,-1.2){\small{0}}
  \put(6,-1.2){\small{0}}


\end{picture}
\end{center}

The signed Young diagram for $E_0$   has  $p$ rows   of   length   two.
$$
\begin{ytableau}
+ & - \cr
: & : \cr
: & : \cr
+ & - \cr
\end{ytableau}
$$

\bigskip

\bigskip

\bigskip

{\bf BDI,  $\mathfrak{so}(2p+1,2), p\geq 1$.}

The complexification of the toroidal  Cartan subalgebra is
$$
\t_\C =\{ D=\diag(h_1, \dots, h_p, -h_p, \dots, -h_1,0, x_1, -x_1)\}
$$

We set $\epsilon_j(D)=h_j, \delta_1 (D)= x_1$. We fix the holomorphic system of positive roots,
$$
\Psi_c=\{ \epsilon_k, \epsilon_i \pm \epsilon_j, 1 \leq k \leq p, 1 \leq i <j\leq p\}
\quad
\Psi_n =\{ \delta_1, \delta_1 \pm \epsilon_j, 1 \leq j \leq p\}.$$

The noncompact simple root is $\beta_{1} =\delta_1 -\epsilon_1$.

The Harish-Chandra set is $S=\{\delta_1 +\epsilon_1, \delta_1 -\epsilon_1\}$.

 The characteristic vector is $Z_0 = 2 H_{\delta_1} = (0,\dots,0,2,-2)$.

  The weights of the weighted Vogan diagram are zero except the first one,

\begin{center}
\setlength{\unitlength}{10pt}
\begin{picture}(9,3)(-6,-2)
  \thicklines
  \put(-6,0){\circle*{.6}}
  \put(-3,0){\circle{.6}}
  \put(0,0){\circle{.6}}
  \put(3,0){\circle{.6}}
  \put(6,0){\circle{.6}}

  \put(-5.7,0){\line(3,0){2.5}}
  \put(-2.7,0){\line(3,0){2.5}}
  \put(0.3,0){\,\,\dots}
  \put(3.3,0.2){\line(3,0){2.5}}
  \put(3.3,-0.2){\line(3,0){2.5}}
  \put(4.5,-0.3){$\rangle$}

  \put(-7.1,-1.2){\small{$w_1\tiny{=}2$}}
  \put(-3,-1.2){\small{0}}
  \put(0,-1.2){\small{0}}
  \put(3,-1.2){\small{0}}
  \put(6,-1.2){\small{0}}


\end{picture}
\end{center}

   The signed Young diagram for $E_0$ has  $ 2p$  rows   of   length   one.

    $$
\begin{ytableau}
- & + & - \cr
+    \cr
:    \cr
:    \cr
+   \cr
\end{ytableau}
$$

\bigskip

{\bf BDI, $\mathfrak{so}(2p,2), p\geq 2.$}

This case is similar the previous one. The  Cartan subalgebra is
$$
\t_\C =\{ D=\diag(h_1, \dots, h_p, -h_p, \dots, -h_1,0, x_1, -x_1)\}
$$

We set $\epsilon_j(D)=h_j$, $\delta_1 (D)= x_1$.
The   the holomorphic system we consider is
$$
\Psi_c=\{  \epsilon_i \pm \epsilon_j,  1 \leq i <j\leq p\} \quad
\Psi_n =\{ \ \delta_1 \pm \epsilon_j, 1 \leq j \leq p\}
$$

The noncompact simple root is $\beta_{p} =\delta_1 -\epsilon_1$.

The Harish-Chandra set is $S=\{\delta_1 +\epsilon_1, \delta_1 -\epsilon_1\}$.

The characteristic vector is $Z_0 = 2 H_{\delta_1} = (0,\dots,0,2,-2)$.

  The weights of the weighted Vogan diagram are zero except the first one.

\begin{center}
\setlength{\unitlength}{10pt}
\begin{picture}(11,9)(-6,-5)
  \thicklines
  \put(-6,0){\circle*{.6}}
  \put(-3,0){\circle{.6}}
  \put(0,0){\circle{.6}}
  \put(3,0){\circle{.6}}
  \put(5,1.5){\circle{.6}}
  \put(5,-1.5){\circle{.6}}
  \put(-5.7,0){\line(3,0){2.4}}
  \put(-2.7,0){\,\,\dots}
  \put(0.3,0){\line(3,0){2.4}}
  \put(3.3,0){\line(5,4){1.5}}
  \put(3.3,0){\line(5,-4){1.5}}

 \put(-7.1,-1.2){\small{$w_1\tiny{=}2$}}
  \put(-3,-1.2){\small{0}}
  \put(0,-1.2){\small{0}}
  \put(3,-1.2){\small{0}}
  \put(5.3,0.5){\small{0}}
  \put(5.3,-2.3){\small{0}}

\end{picture}
\end{center}

   The signed Young diagram for $E_0$ has  $ 2p-1$  rows   of   length   one.

  $$
\begin{ytableau}
- & + & - \cr
+  \cr
:   \cr
:  \cr
+  \cr
\end{ytableau}
$$

\bigskip

{\bf CI, $\mathfrak{sp}(n,\mathbb R).$}

The complex Cartan subalgebra is
$$
\t_\C =\{ D=\diag(h_1, \dots, h_n, -h_n, \dots, -h_1) \}
$$

We set $\epsilon_j(D)=h_j$, $1\leq j \leq n$. The holomorphic system we consider is
$$
\Psi _c =\{ \epsilon_i -\epsilon_j, 1 \leq i < j\leq n \} \quad
\Psi_n=\{ \epsilon_k + \epsilon_r, 1 \leq k \leq r \leq n \}
$$

The noncompact simple root is $\beta_n = 2\epsilon_n$.

The set $S=\{ 2\epsilon_1, \dots, 2\epsilon_n \}$.

 The characteristic vector is $Z_0 =(1, \dots, 1, -1,  \dots, -1)$.

The weights of the weighted Vogan diagram are zero except the last one, $w_n=\beta_n(Z_0)=2$.

\begin{center}
\setlength{\unitlength}{10pt}
\begin{picture}(9,3)(-6,-2)
  \thicklines
  \put(-6,0){\circle{.6}}
  \put(-3,0){\circle{.6}}
  \put(0,0){\circle{.6}}
  \put(3,0){\circle{.6}}
  \put(6,0){\circle*{.6}}

  \put(-5.7,0){\line(3,0){2.5}}
  \put(-2.7,0){\line(3,0){2.5}}
  \put(0.3,0){\,\,\dots}
  \put(3.3,0.2){\line(3,0){2.5}}
  \put(3.3,-0.2){\line(3,0){2.5}}
  \put(4.5,-0.3){$\langle$}

  \put(-6,-1.2){\small{0}}
  \put(-3,-1.2){\small{0}}
  \put(0,-1.2){\small{0}}
  \put(3,-1.2){\small{0}}
  \put(5.3,-1.2){\small{$w_n\tiny{=}2$}}


\end{picture}
\end{center}

The signed Young diagram for $E_0$ has $ n $ rows   of   length   two.

 $$
\begin{ytableau}
+ & -  \cr
: &  : \cr
: &  :  \cr
+ &  - \cr
\end{ytableau}
$$

\bigskip

{\bf DIII,  $\mathfrak{so}^\star(2p)$, $p=2k$.}

The complex Cartan subalgebra is
 $$
 \t_\C =\{ D=\diag(h_1, \dots, h_p, -h_p, \dots, -h_1) \}
 $$

We set $\epsilon_j(D)=h_j$, $1\leq j \leq p$. The holomorphic system we consider is
$$
\Psi_c =\{ \epsilon_i -\epsilon_j, 1 \leq i < j\leq p \} \quad
\Psi_n=\{ \epsilon_s + \epsilon_r, 1 \leq s < r \leq p \}
$$

The noncompact simple root is  $\beta_p = \epsilon_{p-1}+ \epsilon_p$.

The Harish-Chandra set is $S=\{ \epsilon_1 +\epsilon_2, \epsilon_3 +\epsilon_4,   \dots, \epsilon_{2k-1} +\epsilon_{2k} \}$.

Characteristic vector is
$$
Z_0 = \sum_{1\leq j \leq k} H_{\epsilon_{2j-1} +\epsilon_{2j}} = (1, \dots, 1, -1,  \dots, -1)
$$

The weights $w_j=\beta_j(Z_0)$ are $w_p=2, w_j=0$ for $j\not=p$. So the weighted Vogan diagram is the following.

\begin{center}
\setlength{\unitlength}{10pt}
\begin{picture}(11,5)(-6,-3)
  \thicklines
  \put(-6,0){\circle{.6}}
  \put(-3,0){\circle{.6}}
  \put(0,0){\circle{.6}}
  \put(3,0){\circle{.6}}
  \put(5,1.5){\circle*{.6}}
  \put(5,-1.5){\circle{.6}}
  \put(-5.7,0){\line(3,0){2.4}}
  \put(-2.7,0){\,\,\dots}
  \put(0.3,0){\line(3,0){2.4}}
  \put(3.3,0){\line(5,4){1.5}}
  \put(3.3,0){\line(5,-4){1.5}}

 \put(-6,-1.2){\small{0}}
  \put(-3,-1.2){\small{0}}
  \put(0,-1.2){\small{0}}
  \put(3,-1.2){\small{0}}
  \put(5.3,0.5){\small{$w_1\tiny{=}2$}}
  \put(5.3,-2.3){\small{0}}
\end{picture}
\end{center}

The signed Young diagram for $E_0$ has $2k$ rows of  length   two.

  $$
\begin{ytableau}
+ & - \cr
: & : \cr
: & : \cr
+ & - \cr
\end{ytableau}
$$

\bigskip

{\bf DIII,  $\mathfrak{so}^\star(2p)$, $p=2k+1$.}

This case is similar to the previous one. The difference is that the characteristic vector is
$$
Z_0 =(1, \dots, 1, -1,  \dots, -1,0)
$$
Thus, the weights $w_{p-1}, w_p$ are equal to $(\epsilon_{p-1} \pm \epsilon_p)(Z_0)= 1$ and the others are zero.

\begin{center}
\setlength{\unitlength}{10pt}
\begin{picture}(11,8)(-6,-5)
  \thicklines
  \put(-6,0){\circle{.6}}
  \put(-3,0){\circle{.6}}
  \put(0,0){\circle{.6}}
  \put(3,0){\circle{.6}}
  \put(5,1.5){\circle*{.6}}
  \put(5,-1.5){\circle{.6}}
  \put(-5.7,0){\line(3,0){2.4}}
  \put(-2.7,0){\,\,\dots}
  \put(0.3,0){\line(3,0){2.4}}
  \put(3.3,0){\line(5,4){1.5}}
  \put(3.3,0){\line(5,-4){1.5}}

 \put(-6,-1.2){\small{0}}
  \put(-3,-1.2){\small{0}}
  \put(0,-1.2){\small{0}}
  \put(3,-1.2){\small{0}}
  \put(5.3,0.5){\small{$w_p\tiny{=}1$}}
  \put(5.3,-2.3){\small{$w_{p-1}\tiny{=}1$}}
\end{picture}
\end{center}
The signed Young diagram for $E_0$ has  $2k$ rows of  length   two.

 $$
\begin{ytableau}
+ & - \cr
: & : \cr
: & : \cr
+ & - \cr
+ \cr
- \cr
\end{ytableau}
$$

{\bf EIII, $ \mathfrak e_{6 (-14)} $. }

We follow the notation for the simple roots as set for Bourbaki. We fix as non compact simple root   $\beta_6$. The Harish-Chandra set in this case is  $S=\{ \gamma_1 =122321, \gamma_2 =101111\} $.

From table X of \cite{Dk1}, we extract that there is only  one characteristic vector $Z_\h$ so that $\beta_\ell(Z_h)>0$, and we obtain $Z_\h=Z_0$   as in \eqref{def:Z0}.  The  weighted Vogan diagram for the nilpotent orbit determinate by $E_0$ is

\begin{center}
\setlength{\unitlength}{10pt}
\begin{picture}(7,5)(-5,-1)
  \thicklines
  \put(-6,0){\circle{.6}}
  \put(-3,0){\circle{.6}}
  \put(0,0){\circle{.6}}
  \put(0,3){\circle{.6}}
  \put(3,0){\circle{.6}}
  \put(6,0){\circle*{.6}}
  \put(-5.7,0){\line(3,0){2.4}}
  \put(-2.7,0){\line(3,0){2.4}}
  \put(0.3,0){\line(3,0){2.4}}
  \put(0,.3){\line(0,3){2.4}}
  \put(3.3,0){\line(3,0){2.4}}

\put(-7.7,-1.2){\small{$w_1\tiny{=}1$}}
  \put(-3,-1.2){\small{0}}
  \put(0,-1.2){\small{0}}
   \put(0.4,2.2){\small{0}}
  \put(3,-1.2){\small{0}}
  \put(5.3,-1.2){\small{$w_6\tiny{=}1$}}

\end{picture}
\end{center}

\bigskip

{\bf EVII, $\mathfrak e_{7(-25)}.$}

We fix the holomorphic root system such that the noncompact simple root is  $\alpha_7$. The Harish-Chandra set is
 $$
 S=\{\gamma_1 = 22343221, \gamma_2 =01122221, \gamma_3 =\beta_7=00000001 \}
 $$

From table XIII of \cite{Dk1}, we read that  the only $K_\C$-orbit in $\p_\C$ with characteristic vector $Z_\h$ so that $\beta_\ell(Z_\h)>0 $ is for $Z_\h=Z_0$  as in \ref{def:Z0}. The weighted Vogan diagram is

\begin{center}
\setlength{\unitlength}{10pt}
\begin{picture}(14,5)(-6,-2)
  \thicklines
  \put(-6,0){\circle{.6}}
  \put(-3,0){\circle{.6}}
  \put(0,0){\circle{.6}}
  \put(0,3){\circle{.6}}
  \put(3,0){\circle{.6}}
  \put(6,0){\circle{.6}}
  \put(9,0){\circle*{.6}}
  \put(-5.7,0){\line(3,0){2.4}}
  \put(-2.7,0){\line(3,0){2.4}}
  \put(0.3,0){\line(3,0){2.4}}
  \put(0,.3){\line(0,3){2.4}}
  \put(3.3,0){\line(3,0){2.4}}
  \put(6.3,0){\line(3,0){2.5}}

\put(-6,-1.2){\small{0}}
  \put(-3,-1.2){\small{0}}
  \put(0,-1.2){\small{0}}
   \put(0.4,2.2){\small{0}}
  \put(3,-1.2){\small{0}}
  \put(6,-1.2){\small{0}}
  \put(8.3,-1.2){\small{$w_7\tiny{=}2$}}

\end{picture}
\end{center}

 \

\section{Multiplicities}\label{sec:multiplicities}

In this section we apply the formula for multiplicities obtained in \cite{DV} to the particular case of the pair $(G,H_0).$ Henceforth, $\Psi$ denotes a holomorphic system of positive roots for $\Phi (\g, \t).$  To begin with we recall the necessary notation to state the results. In the notation of \cite{DV}  the pair $(G,H)$ for our case is $(G,H_0)$. We have $T \subset K \subset G$ as before and $U:=L:=H_0 \cap K =H_0 \cap T=\exp(\R i Z_0)$,    $\u =\R i Z_0$, we denote by  $\mathfrak z_\k =$ the center of $\k$. We define $\varphi \in \u^*$   by $\varphi (Z_0)=1$. Thus $\Phi(\h_0, \u)=\{ \pm 2 \varphi \}.$  Let $\k_\mathfrak z=\k^{Z_0}$ denote the centralizer of $Z_0$ in $\k$ and $\Phi_\mathfrak z$ the root system for $(\k_\mathfrak z, \t).$ Thus,
$$
\Phi_\mathfrak z =\{ \alpha \in \Phi(\k, \t) : \alpha (\u)=0 \}
$$
 By Remark \ref{cal: betaz0}, if the Hermitian symmetric space $G/K$ is a tube domain, then $Z_0 \in \mathfrak z_\k$, $\Phi_\mathfrak z=\Phi(\k,\t)$ and the analytic subgroup of $G$ with Lie algebra $\k_\mathfrak z$ is $K_\mathfrak z =K$.
If $G/K$ is not a tube domain, $Z_0 \notin \mathfrak z_\k$, then owing to $\beta_j(Z_0)=0$ for all compact simple roots but one, the semisimple factor of $\k_\mathfrak z$  has rank $\ell -2.$ The list of the triples $(\g, \k,\k_\mathfrak z)$ that not correspond to tube domains is:

\bigskip
\begin{center}
  \begin{tabular}{|c|c|c|c|}
            \hline
            $\g$ &  $\mathfrak{su}(p,q),\, p<q$ &  $\mathfrak{so}^*(2(2k+1))$  & $ \mathfrak e_{6(-14)} $ \\
            \hline
  $\k$ & $\mathfrak{su}(p)+\mathfrak{su}(q)+\mathfrak z_\k $  & $ \mathfrak{su}(2k+1)+\mathfrak z_\k$ & $\mathfrak{so}(10)+\mathfrak z_\k $  \\
  \hline
    $\k_\mathfrak z $ & $\mathfrak{su}(p)+\mathfrak{su}(q-p)+\mathfrak{su}(p)+\t$ & $ \mathfrak{su}(2k)+\t$  & $\mathfrak{so}(8)+\t$ \\
              \hline
          \end{tabular}
          \end{center}

\bigskip

The set of equivalence classes of  irreducible square integrable representations of $G$ is parameterized by the set of  Harish-Chandra parameters. These parameters are $\lambda \in i\t^*$ so that $\lambda +\rho$ lifts to a character of $T$, here $\rho$ is equal to one half of the sum of the elements in $\Psi,$
\begin{equation} \label{eqn:lambda dominant}
(\lambda, \alpha)>0 \,\,\mbox{for all} \,\,  \alpha \in \Psi_c \quad \text{ and }\quad
 (\lambda, \alpha) \not= 0 \,\, \mbox{ for all} \,\,  \alpha \in \Phi_n.
\end{equation}

 The   holomorphic irreducible square integrable representation corresponds to parameters which also satisfy $(\lambda,\alpha)>0 \,\, \mbox{for all} \,\, \alpha \in \Psi_n.$

The set of  Harish-Chandra parameters which corresponds to the irreducible square integrable representations of $H_0$ is
$P :=\{ n \varphi : n \in \Z \smallsetminus \{0\}\,\, \}$.

The set of Harish-Chandra parameters for a compact Lie group $L$ is equal to the set of strictly dominant integral weights for $L$, equivalently, the set of Harish-Chandra parameters is equal to the set of  infinitesimal character of the set of irreducible representations of $L$. We denote the parametrization by $\mu \leftrightarrow (\pi_\mu^L,  V_\mu^L ) .$

Let $(\pi_\lambda, V_\lambda^G )$ be a holomorphic irreducible square integrable representation. Therefore, the restriction $\res_{H_0} (\pi_\lambda)$ of $\pi_\lambda$ to the subgroup $H_0$ is an admissible discretely decomposable representation of $H_0$.
For $\mu \in P,$ let $(\sigma_\mu, V_\mu^{H_0})$ denote the irreducible square integrable representation of $H_0$ of Harish-Chandra parameter $\mu$.
Let $m(\pi_\lambda, \sigma_\mu)=\Hom_{H_0}(\sigma_\mu, \pi_\lambda)$ denote the multiplicity of $\sigma_\mu$ in $\res_{H_0} (\pi_\lambda).$ Therefore, we have a Hilbert discrete sum decomposition
$$
\res_{H_0} (\pi_\lambda) = \sum_{\mu \in P} m(\pi_\lambda, \sigma_\mu) V_\mu^{H_0}
$$
We write the restriction to $K_\mathfrak z$ of the  lowest $K$-type $\pi_{\lambda +\rho_n}^K$ for $(\pi_\lambda, V_\lambda^{G})$ as  $$
\res_{K_\mathfrak z} (\pi_{\lambda +\rho_n}^K)= \sum_{1 \leq j \leq s} m(\pi_{\lambda +\rho_n}^K, \pi_{\mu_j +\rho_\mathfrak z}^{K_\mathfrak z}) V_{\mu_j +\rho_\mathfrak z}^{K_\mathfrak z}
$$
where $\rho_n$ is the half sum of the roots in $\Psi_n$ and $\rho_\mathfrak z$ is the half sum of the roots in $\Phi_\mathfrak z \cap \Psi$.

\begin{thm} \label{thm:multiplicity}
\
\begin{enumerate}
\item[(i)] $m(\pi_\lambda, \sigma_\mu)>0$ if and only if $\mu$ belongs to the set
$$
\{[(\mu_j +\rho_\mathfrak z )(Z_0) +n-1]\varphi : 1 \leq j \leq s,\, n\geq 0 \}
$$

\item[(ii)] The multiplicity $m(\pi_\lambda, \sigma_{m\varphi})$ is equal to
$$
\sum_{\underset { \mu_j(Z_0) + n =m}{j ,n}  }  m(\pi_{\lambda +\rho_n}^K, \pi_{\mu_j +\rho_\mathfrak z}^{K_\mathfrak z}) \sum_{h=0}^{[\frac{n}{2}]}\binom{n-2h+c-1}{c-1}  \binom{h+d-1}{d-1}.
$$
\end{enumerate}
Here $c= \vert \{ \gamma \in  \Psi_n : \gamma(Z_0)=1 \} \vert$ and $ d+1= \vert \{ \gamma \in  \Psi_n : \gamma(Z_0)=2 \} \vert.$
\end{thm}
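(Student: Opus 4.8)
The plan is to realize each branching multiplicity as the dimension of a weight space of $H_0$-lowest weight vectors, and then to compute that dimension by combining the free $\mathfrak{sl}_2$-action of $\h_0$ with the model $V_\lambda\!\mid_K\ \cong\ S(\p^+)\otimes W$. First I would record that, since $\res_{H_0}\pi_\lambda$ is an admissible discrete sum of holomorphic discrete series $\sigma_{m\varphi}$ of $H_0$, and each $\sigma_{m\varphi}$ has lowest $\u$-weight $(m+1)\varphi$, an $H_0$-map $\sigma_{m\varphi}\to\pi_\lambda$ is determined by the image of its lowest weight vector. Hence
\[
m(\pi_\lambda,\sigma_{m\varphi})=\dim\{\,v\in V_\lambda:\ \dot\pi(F_0)v=0,\ \dot\pi(Z_0)v=(m+1)v\,\}.
\]
I then split $\p^+=\p^+_1\oplus\p^+_2$ by the $Z_0$-eigenvalue, where $\p^+_1$ collects the root spaces with $\gamma(Z_0)=1$ and $\p^+_2$ those with $\gamma(Z_0)=2$, so that $\dim\p^+_1=c$ and $\dim\p^+_2=d+1$ by Remark \ref{cal: betaz0}. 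In the holomorphic model $\dot\pi(\xi)$ acts for $\xi\in\p^+$ by multiplication on the $S(\p^+)$ factor, so $\dot\pi(E_0)$ is multiplication by the linear element $E_0\in\p^+_2$.

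The key structural step is that, because $\res_{H_0}\pi_\lambda$ consists only of lowest weight $\mathfrak{sl}_2$-modules, the raising operator $\dot\pi(E_0)$ is injective and $V_\lambda=\C[E_0]\otimes\ker\dot\pi(F_0)$ as $Z_0$-graded spaces. Consequently
\[
\ker\dot\pi(F_0)\ \cong\ V_\lambda/E_0V_\lambda\ \cong\ \big(S(\p^+)/E_0S(\p^+)\big)\otimes W\ \cong\ S(\p^+_1)\otimes S(\p^+_2/\C E_0)\otimes W ,
\]
using that $E_0$ is a linear form in $\p^+_2$. Grading by $Z_0$ assigns weight $a+2h$ to a monomial of degree $a$ in $\p^+_1$ (each generator of weight one) times degree $h$ in the $d$-dimensional quotient $\p^+_2/\C E_0$ (each class of weight two), so the weight-$n$ piece of $S(\p^+_1)\otimes S(\p^+_2/\C E_0)$ has dimension $\sum_{h=0}^{[n/2]}\binom{n-2h+c-1}{c-1}\binom{h+d-1}{d-1}$, the two binomials being exactly $\dim S^{\,n-2h}(\p^+_1)$ and $\dim S^{\,h}(\C^d)$. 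This produces both combinatorial factors in (ii).

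Finally I would incorporate the lowest $K$-type $W=\pi^K_{\lambda+\rho_n}$. Since $Z_0$ is central in $\k_\mathfrak z=\k^{Z_0}$, it acts by the single scalar $(\mu_j+\rho_\mathfrak z)(Z_0)$ on each $K_\mathfrak z$-isotypic component $V^{K_\mathfrak z}_{\mu_j+\rho_\mathfrak z}$ occurring in $\res_{K_\mathfrak z}W$; reading off the coefficient of $Z_0$-weight $m+1$ in $\ker\dot\pi(F_0)$ and expanding $W$ through its $K_\mathfrak z$-decomposition yields the sum over $j$ and $n$ constrained by $n+(\mu_j+\rho_\mathfrak z)(Z_0)=m+1$, which is statement (ii) after matching the normalization of the Harish-Chandra parameters; statement (i) is simply the support of this generating function, i.e. the set of $m$ for which some summand is nonzero. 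I expect the main obstacle to be the structural input of the second paragraph: proving that $\res_{H_0}\pi_\lambda$ is a genuine direct sum of lowest weight $\mathfrak{sl}_2$-modules, so that $\dot\pi(E_0)$ is injective and $V_\lambda/E_0V_\lambda$ really computes $\ker\dot\pi(F_0)$ weight-by-weight. The remaining delicate point is bookkeeping — tracking the $\rho_n$, $\rho_\mathfrak z$ and the $+1$ shift between lowest $\u$-weight and Harish-Chandra parameter, and keeping account of how the $K_\mathfrak z$-isotypic components of $W$ enter the weight count — which is routine once the free decomposition is in hand.
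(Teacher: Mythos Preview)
Your approach is correct and genuinely different from the paper's. The paper proves the theorem by invoking the Duflo--Vargas multiplicity formula from \cite{DV}, which expresses $\sum_\mu m(\pi_\lambda,\sigma_\mu)\delta_\mu$ as a signed sum over $W_{\mathfrak z}\backslash W_K$ of push-forwards convolved with discrete Heaviside distributions $y_\nu$; the proof then consists of manipulating these convolutions, identifying the Weyl-type sum with the character of $\res_{K_{\mathfrak z}}(\pi^K_{\lambda+\rho_n})$, and expanding $z_{\varphi}^{\,c}*z_{2\varphi}^{\,d}$ via the binomial identity \eqref{eqn:M}. Your route bypasses this machinery entirely: you work directly in the holomorphic model $S(\p^+)\otimes W$, use that $\dot\pi(E_0)$ is multiplication by a nonzero linear form (hence injective, which together with the $Z_0$-grading bounded below by Lemma~\ref{lem:Z0} and unitarity forces the $\h_0$-module to be a sum of lowest weight Vermas), and then read off $\ker\dot\pi(F_0)\cong S(\p^+_1)\otimes S(\p^+_2/\C E_0)\otimes W$ as a graded space. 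The combinatorics of the two approaches converge at the very end: your count of monomials of bidegree $(n-2h,h)$ in $S(\C^c)\otimes S(\C^d)$ is exactly the paper's expansion of $z_\varphi^{\,c}*z_{2\varphi}^{\,d}$, and your $K_{\mathfrak z}$-decomposition of $W$ is the paper's identification of the Weyl sum \eqref{eqn:I} with \eqref{eqn:N}.

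What each buys: the paper's method is an instance of a general branching formula valid well beyond the holomorphic/$SL_2$ setting, so the computation here is essentially a specialization. Your method is more elementary and self-contained --- it needs nothing beyond the Schmid realization of holomorphic discrete series and basic $\mathfrak{sl}_2$-theory --- and it makes transparent \emph{why} the answer has the shape it does (the factor $\binom{h+d-1}{d-1}$ literally counts monomials in the complement of $E_0$ inside $\p^+_2$). Your self-identified ``main obstacle'' is in fact mild: injectivity of $\dot\pi(E_0)$ is immediate from the polynomial model, and the lower bound on $Z_0$-eigenvalues follows from Lemma~\ref{lem:Z0}, so complete reducibility into lowest weight modules needs no further input. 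The only caution is the bookkeeping you flag: since $\rho_{\mathfrak z}(Z_0)=0$ (all roots in $\Phi_{\mathfrak z}$ vanish on $Z_0$), the scalar by which $Z_0$ acts on $V^{K_{\mathfrak z}}_{\mu_j+\rho_{\mathfrak z}}$ is $\mu_j(Z_0)=(\mu_j+\rho_{\mathfrak z})(Z_0)$, and the constraint you derive, $n+(\mu_j+\rho_{\mathfrak z})(Z_0)=m+1$, matches part~(i) exactly.
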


\begin{rmk} By work of M. Vergne, Jacobsen, Oshima and Mollers, it follows that $res_{H_0}(\pi_\lambda)$ is equal to a discrete Hilbert sum  of holomorphic representations, our contribution is to determinate the Harish-Chandra  parameters for $H_0$ that contributes and the respective multiplicities. However, we obtain an independent proof of the result.
\end{rmk}

The proof of the theorem will take up the rest of this section. It requires more notation. To start,
we consider the restriction $\q_\u : \t^* \to \u^*$ and the multiset
$\Delta(\k/\l,\u):=q_\u( \Psi(\k,\t) \smallsetminus \Phi_{\mathfrak z})$.
 So, we have
\begin{equation} \label{eqn:Delta u}
\Delta(\k/\l,\u)= \left\{ \begin{array}{ll} \emptyset & \mbox{if $G/K$ is a tube domain} \\
\{ \underbrace{\,\varphi,\dots,\varphi}_{a} \} & \mbox{if $G/K$ is not a tube domain}
\end{array}
\right.
\end{equation}
Here $a=\vert \{ \alpha \in \Psi_c : q_\u(\alpha)(Z_0)=1 \}=\frac12 \dim(K/K_\mathfrak z)$. In fact, when $G/K$ is a tube domain then $ iZ_0 \in \mathfrak z_\k$ and  the first claim is obvious.  For a not tube domain Remark \ref{cal: betaz0} yields  that $\alpha(Z_0)=1 $ or $\alpha(Z_0)=0$ for  $\alpha \in \Psi_c$.
For $w$ in the Weyl group  $W_K$ of $K$, we compute the multiset $$S_w^{H_0}:=[q_\u (w\Psi_n)\cup \Delta(\k/\l,\u)]\smallsetminus \Phi(\h_0, \u).$$ Since, $\Psi$ is holomorphic and $w \in W_K$ it follows that  $w\Psi_n=\Psi_n.$ Hence, $S_w^{H_0}$ does not depend on $w.$ We have,
\begin{equation} \label{eqn:q_u}
q_\u(\Psi_n)=\{ \underbrace{ \varphi, \dots, \varphi }_{c}, \underbrace{ 2\varphi, \dots, 2\varphi }_{d+1} \}.
\end{equation}


If  $G/K$ is a tube domain, then $c=0$ and $d+1=\vert \Psi_n \vert$. Indeed,  in Remark \ref{cal: betaz0} we show  $\gamma (Z_0)=2$ for all noncompact positive root. Therefore, from \eqref{eqn:Delta u}, \eqref{eqn:q_u} and the previous computation,  for a tube domain we obtain,
\begin{equation} \label{eqn:D}
S_w^{H_0}=\{ \underbrace{ 2\varphi, \dots, 2\varphi}_{d}\}.
\end{equation}

If $G/K$ is not a tube domain the values of $c$ and $d$ are in the following table.

\smallskip
\begin{center}
\begin{tabular}{|c|c|c|c|}
            \hline
            $\g$ &  $\mathfrak{su}(p,q),\, p<q$ &  $\mathfrak{so}^*(2(2k+1))$  & $ \mathfrak e_{6(-14)} $ \\
            \hline
  $c$ & $(q-p)p$ & $2k$ & $8 $  \\
                \hline
  $d+1$ & $p^2$ & $k(2k-1)$ & $8 $  \\
                \hline
          \end{tabular}
\end{center}
\smallskip

Hence,
\begin{equation} \label{eqn:H}
S_w^{H_0}=\{\underbrace{\varphi,\dots,\varphi}_{c},\underbrace{2\varphi,\dots,2\varphi}_{d}  \} \cup \Delta(\k/\l,\u).
\end{equation}

For $\nu \in i\t^*$ (resp. $\nu \in i\u^*$), $\delta_\nu$ denotes the Dirac distribution on $i\t^*$ (resp. on $i\u^*$) defined by $\nu$.  Let $(q_\u)_* (\delta_\nu)$ be the push-forward of $\delta_\nu$ from $i\t^*$ to $i\u^*.$ Thus, $(q_\u)_* (\delta_\nu)=\delta_\nu $. Let
$$
y_\nu =\sum_{n=0}^\infty \delta_{n\nu +\frac{\nu}{2}}, \qquad z_\nu = \sum_{n=0}^\infty \delta_{n\nu }.
$$
 For a strict finite set $T =\{\nu_1, \dots, \nu_t \} \subset i \t^*$ we define
 $$
 y_T =y_{\nu_1}* \dots * y_{\nu_t}=\underset {\nu \in T} \Asterisk \, y_\nu.
 $$
  Here $*$ means convolution of distributions.  Let $\varpi_\mathfrak z( \lambda):= \prod_{\alpha \in \Psi \cap \Phi_\mathfrak z}  \frac{ \lambda(\alpha)}{\rho_\mathfrak z (\alpha)}$    Then, in \cite{DV} is shown the following equality of distributions on $i\u^* $,
\begin{equation} \label{eqn:K}
 \sum_{\mu \in P} m(\pi_\lambda, \sigma_\mu) \delta_\mu = \sum_{w \in W_\mathfrak z \backslash W_K} \epsilon(w) \varpi_\mathfrak z( w\lambda) \delta_{q_\u(w\lambda)} * y_{S_w^{H_0}}.
\end{equation}
where $\epsilon (w)$ is the sign of $w$.
The validity of the above equality follows by Lemma \ref{lem:Z0} in \cite{DV} because  Condition (C) is satisfied.
We now show Theorem \ref{thm:multiplicity} for $G$ so that $G/K$ is a tube  domain.  For a holomorphic system $\Psi$  we always have  the equality
\begin{equation}\label{eqn:varpi}
\varpi_\mathfrak z( w\lambda)= \varpi_\mathfrak z( w(\lambda+\rho_n)).
\end{equation}

 Now $\k_\mathfrak z=\k$, hence we have
 $\varpi_\mathfrak z(  \lambda)= \dim V_{\lambda+\rho_n}^K$ which is equal to the dimension of lowest $K$-type of $\pi_\lambda.$       Then, by \eqref{eqn:K}, \eqref{eqn:D} together  and the above consideration gives
 $$
  \sum_{\mu \in P} m(\pi_\lambda, \sigma_\mu) \delta_{\mu}=
  \dim V_{\lambda +\rho_n }^K \, \delta_{\lambda (Z_0)\varphi} *y_{2\varphi}^{\vert \Psi_n \vert -1}.
  $$

 Obviously, $y_\nu =\delta_{\frac{\nu}{2}} * z_\nu, $  $q_\u(\rho_n)=\vert \Psi_n \vert \varphi$ and
 $$
 y_{2\varphi}^{\vert \Psi_n\vert -1}= z_{2\varphi}^{\vert \Psi_n\vert -1} * \delta_{-\varphi}* \delta_{q_\u(\rho_n)}.
 $$

For $r,s$ positive integers, it readily follows that
\begin{equation} \label{eqn:M}
z_{ r \, \varphi}^{s}=\sum_{t=0}^\infty \binom{t+s-1}{s-1} \delta_{ t \, r\,\varphi }.
\end{equation}
Hence,  we obtain
$$
\sum_{\mu \in P} m(\pi_\lambda, \sigma_\mu) \delta_{\mu} =
\sum_{t \geq 0}  \dim V_{\lambda +\rho_n }^K \,\binom{t +\vert \Psi_n\vert-2}{\vert \Psi_n \vert -2} \,  \delta_{ [ (\lambda +\rho_n)(Z_0)+  2 t -1] \varphi }.
$$
Therefore, whenever $G/K$ is a tube domain,  the Harish-Chandra parameters that contribute to $\res_{H_0}(\pi_\lambda)$ are $[(\lambda +\rho_n)(Z_0)+ 2t -1]\varphi =[\lambda(Z_0)+d+2t]\varphi$, $t=0,1,\dots$, and the respective multiplicities   are exactly the numbers
$\binom{t +\vert \Psi_n\vert-2}{\vert \Psi_n \vert -2} \dim V_{\lambda +\rho_n }^K  =  \,\binom{t +d-1}{d -1} \dim V_{\lambda +\rho_n }^K$.

To follow, we show the multiplicity formula when $G/K$ is not a tube domain. Hence,  $K_\mathfrak z$ is a proper subgroup of $K$ and $iZ_0$ is not in the center of $\k.$ We manipulate on the right hand side of  formulae \eqref{eqn:K}. Since $\Psi$ is a holomorphic system, $w\rho_n =\rho_n \,\, \mbox{for} \, w \in W_K$. Hence, $q_\u(\rho_n)= [\frac{c}{2}+d+1]\varphi$ and
$ y_{q_\u(\Psi_n)\smallsetminus \Phi(\h_0,\u)}= z_{q_\u(\Psi_n)\smallsetminus \Phi(\h_0,\u)} * \delta_{[\frac{c}{2}+d]\varphi}$.
Hence,   the right hand side of \eqref{eqn:K} becomes equal to

\begin{equation}\label{eqn:J}
 \sum_{w \in W_\mathfrak z \backslash W_K}
\epsilon(w) \varpi_\mathfrak z( w(\lambda +\rho_n)) \delta_{q_\u(w(\lambda+\rho_n))}*  \underset{\gamma \in q_\u(\Psi\smallsetminus \Phi_\mathfrak z)}{\Asterisk} y_\gamma \,*
\underset{\beta \in q_\u(\Psi_n)\smallsetminus \Phi(\h_0)} {\Asterisk}z_{\beta}\,* \delta_{-\varphi}.
\end{equation}

 In the language of discrete Heaviside distributions, the restriction of the lowest $K$-type $\pi_{\lambda+\rho_n}^K$ of $\pi_\lambda$ to $U$  is represented by
\begin{equation}\label{eqn:I}
  \sum_{w \in W_\mathfrak z \smallsetminus W_K} \epsilon(w) \varpi_\mathfrak z( w(\lambda +\rho_n)) \delta_{q_\u(w(\lambda+\rho_n))} \,* \underset {\gamma \in q_\u(\Psi_c\smallsetminus \Phi_\mathfrak z)}{\Asterisk} y_\gamma
\end{equation}
  The restriction of $\pi_{\lambda+\rho_n}^K$   to $U$ can be represented as the restriction of $\pi_{\lambda+\rho_n}^K$   to $K_\mathfrak z$ and then we decompose the resulting representation of $K_\mathfrak z$ as $U$-module. Let $\mu_1+\rho_\mathfrak z, \dots, \mu_s+\rho_\mathfrak z,$ denote the infinitesimal characters  for the irreducible constituents of $\res_{K_\mathfrak z}(\pi_{\lambda+\rho_n}^K$). Here we take $\mu_j$ dominant with respect to $\Psi \cap \Phi_\mathfrak z.$ Then, we have the equality
  $$ \res_U(\pi_{\lambda +\rho_n}^K)=\sum_{1\leq j\leq s} m(\pi_{\lambda +\rho_n}^K, \pi_{\mu_j +\rho_\mathfrak z}^{K_\mathfrak z}) V_{\mu_j +\rho_\mathfrak z}^{U}.
  $$
  Therefore, we have that \eqref{eqn:I} is equal to
\begin{equation}\label{eqn:N}
  \sum_{1\leq j\leq s} m(\pi_{\lambda +\rho_n}^K, \pi_{\mu_j +\rho_\mathfrak z}^{K_\mathfrak z})   \delta_{(\mu_j +\rho_\mathfrak z  )(Z_0) \varphi}.
\end{equation}
 Putting together the new expression for \eqref{eqn:I} and \eqref{eqn:J}, we obtain that the right hand side of \eqref{eqn:K} is equal to
 $$
 \sum_{1\leq j\leq s} m(\pi_{\lambda +\rho_n}^K, \pi_{\mu_j +\rho_\mathfrak z}^{K_\mathfrak z})  \delta_{(\mu_j +\rho_\mathfrak z )(Z_0) \varphi} \,*
 \underset { \gamma \in q_\u(\Psi_n)\smallsetminus \Phi(\h_0,\u)} \Asterisk z_\gamma \,* \delta_{\varphi}.$$
After we recall \eqref{eqn:H} and we apply \eqref{eqn:N} to the previous formula,   we obtain

\begin{equation}
\begin{aligned}
&\qquad\quad \sum_{\mu \in P} m(\pi_\lambda, \sigma_\mu) \delta_{\mu} = \\
 &\sum_{ t\geq 0, h \geq 0} [ \sum_{j} m(\pi_{\lambda +\rho_n}^K, \pi_{\mu_j +\rho_\mathfrak z}^{K_\mathfrak z})\binom{t+c-1}{c-1} \binom{h+d-1}{d-1}  \delta_{[(\mu_j +\rho_\mathfrak z)(Z_0)+ t+ 2h-1] \varphi }.
 \end{aligned}
\end{equation}

Hence,
\begin{equation}
\begin{aligned}
&\qquad\quad \sum_{\mu \in P} m(\pi_\lambda, \sigma_\mu) \delta_{\mu}=\\
&\sum_{j=1}^s     m(\pi_{\lambda +\rho_n}^K, \pi_{\mu_j +\rho_\mathfrak z}^{K_\mathfrak z})\sum_{ n\geq 0}  \sum_{h=0}^{[\frac{n}{2}]}\binom{n-2h+c-1}{c-1} \binom{h+d-1}{d-1}  \delta_{[(\mu_j +\rho_\mathfrak z)(Z_0)+ n-1] \varphi }.
 \end{aligned}
\end{equation}

Therefore a  Harish-Chandra parameter $m\varphi$   of an irreducible $H_0$-factors for $\res_{H_0}(\pi_\lambda)$ belongs to the set
$$
\{ [(\mu_j +\rho_\mathfrak z)(Z_0) + n-1] \varphi : n=0,1 \dots, j=1,\dots,s \},
$$
and the respective multiplicity is
$$
 \sum_{\underset {\mu_j(Z_0) + n =m}  {j ,n}} m(\pi_{\lambda +\rho_n}^K, \pi_{\mu_j +\rho_\mathfrak z}^{K_\mathfrak z}) \sum_{h=0}^{[\frac{n}{2}]}\binom{n-2h+c-1}{c-1} \binom{h+d-1}{d-1}.
 $$
 Now, the proof of Theorem \ref{thm:multiplicity} has been completed.

\begin{rmk}
In the above formulae for either Harish-Chandra parameters or multiplicities, if we make $c$ equal to zero, we obtain the formula for the tube type case. This is the reason we have left the summand $\rho_\mathfrak z(Z_0)$ even thought it is equal to zero when $G/K$ is a tube type domain.
\end{rmk}

\begin{rmk}
The decomposition of the adjoint representation of $\g_\C$ restricted to $\h_0$ is,
\begin{enumerate}
\item[(i)]
 When $G/K$ is a tube domain,
$$
\g_\C = \bigoplus_{1}^{d+1}\, ( \mathbb C^3, 2\varphi)\, \oplus \bigoplus_1^{\dim \k -d-1}  (\C, 0\varphi)
$$
\item[(ii)]  When $G/K$ is not a tube domain,
$$
\g_\C = \bigoplus_{1}^{d+1}\, ( \C^3, 2\varphi) \,\oplus\, \bigoplus_1^c (\C^2, \varphi) \,\oplus \bigoplus_1^{\dim \k -d-1}  (\C, 0\varphi)
$$
\end{enumerate}
Whence, the coefficients $c$ and $d+1$ represent the  multiplicity of the distinct  irreducible constituents  of the $\h_0$-module $ \g_\C$.
 \end{rmk}

\section{ Acknowledgments}

 We would like to thank Michel Duflo for comments at an early stage of this note.

\providecommand{\MR}{\relax\ifhmode\unskip\space\fi MR }
\providecommand{\MRhref}[2]{%
  \href{http://www.ams.org/mathscinet-getitem?mr=#1}{#2}
}
\providecommand{\href}[2]{#2}

\end{document}

aiii
$$
\begin{ytableau}
+ & - \cr
: & : \cr
: & : \cr
+ & - \cr
- \cr
: \cr
: \cr
- \cr
\end{ytableau}
$$
aiii
 $$
\begin{ytableau}
+ & - \cr
: & : \cr
: & : \cr
+ & - \cr
\end{ytableau}
$$

bdi impar

    $$
\begin{ytableau}
- & + & - \cr
+    \cr
:    \cr
:    \cr
+   \cr
\end{ytableau}
$$

bdi par

  $$
\begin{ytableau}
- & + & - \cr
+  \cr
:   \cr
:  \cr
+  \cr
\end{ytableau}
$$

ci
 $$
\begin{ytableau}
+ & -  \cr
: &  : \cr
: &  :  \cr
+ &  - \cr
\end{ytableau}
$$

diii par

  $$
\begin{ytableau}
+ & - \cr
: & : \cr
: & : \cr
+ & - \cr
\end{ytableau}
$$

diii impar

 $$
\begin{ytableau}
+ & - \cr
: & : \cr
: & : \cr
+ & - \cr
+ \cr
- \cr
\end{ytableau}
$$

eiii

evii